\DeclareMathOperator{\partition}{part}
\DeclareMathOperator{\var}{var}
\newtheorem{theorem}{Theorem}[section]
\newtheorem{proposition}[theorem]{Proposition}
\newtheorem{lemma}[theorem]{Lemma}
\newtheorem{corollary}[theorem]{Corollary}
\newtheorem{problem}[theorem]{Problem}
\theoremstyle{definition}
\newtheorem{example}[theorem]{Example}
\renewcommand*\subjclass[2][2000]{\def\@subjclass{#2}\@ifundefined
{subjclassname@#1}{\ClassWarning{\@classname}{Unknown edition (#1) of
Mathematics Subject Classification; using '2000'.}}{\@xp\let\@xp
\subjclassname\csname subjclassname@#1\endcsname}}
\begin{document}

\title[Special elements in the lattice of overcommutative varieties]{Special
elements in the lattice\\
of overcommutative semigroup varieties\\
revisited}

\author{V. Yu. Shaprynski\v{\i}}

\address{Department of Mathematics and Mechanics, Ural State University,
Lenina 51, 620083 Ekaterinburg, Russia}

\email{vshapr@yandex.ru}

\author{B. M. Vernikov}

\address{Department of Mathematics and Mechanics, Ural State University,
Lenina 51, 620083 Ekaterinburg, Russia}

\email{boris.vernikov@usu.ru}

\thanks{The work was partially supported by the Russian Foundation for Basic
Research (grant No.~09-01-12142).}

\date{}

\begin{abstract}
We completely determine all distributive, codistributive, standard,
costandard, and neutral elements in the lattice of overcommutative semigroup
varieties, thus correcting a gap contained in \cite{Vernikov-01}.
\end{abstract}

\subjclass{Primary 20M07, secondary 08B15.}

\keywords{Semigroup, variety, lattice of subvarieties, overcommutative
variety, distributive element, standard element, neutral element.}

\maketitle

\section{Introduction}
\label{intr}

The class of all semigroup varieties forms a lattice under the following
naturally defined operations: for varieties $\mathcal X$ and $\mathcal Y$,
their \emph{join} $\mathcal{X\vee Y}$ is the variety generated by the
set-theoretical union of $\mathcal X$ and $\mathcal Y$ (as classes of
semigroups), while their \emph{meet} $\mathcal{X\wedge Y}$ coincides with the
class-theoretical intersection of $\mathcal X$ and $\mathcal Y$. This lattice
has been intensively studied for about four decades. A systematic overview of
the material accumulated here is given in the recent survey \cite
{Shevrin-Vernikov-Volkov-09}.

It is a common knowledge that the lattice \textbf{SEM} of all semigroup
varieties is divided into two large sublattices with essentially different
properties: the coideal \textbf{OC} of all \emph{overcommutative} varieties
(that is, varieties containing the variety of all commutative semigroups) and
the ideal of all \emph{periodic} varieties (that is, varieties consisting of
periodic semigroups).

The global structure of the lattice \textbf{OC} has been revealed by Volkov
in~\cite{Volkov-94}. It is proved there that this lattice decomposes into a
subdirect product of its certain intervals and each of these intervals is
anti-isomorphic to the congruence lattice of a certain unary algebra of a
special type (namely, of a so-called $G$-set; a basic information about
$G$-sets see in~\cite{McKenzie-McNulty-Taylor-87}, for instance). The exact
formulation of this result may be found also in~\cite[Theorem~5.1]
{Shevrin-Vernikov-Volkov-09}. We do not reproduce this formulation here
because we do not use it below.

There are several articles where special elements of different types in the
lattice \textbf{SEM} have been examined (see~\cite{Jezek-McKenzie-93,
Vernikov-07-cmod,Vernikov-07-lmod,Vernikov-08-lmod,Vernikov-08-umod1,
Vernikov-08-umod2,Vernikov-codistr,Vernikov-Shaprynskii-distr,
Vernikov-Volkov-06,Volkov-05}). We refer an interested reader to~\cite
[Section~14]{Shevrin-Vernikov-Volkov-09} for an overview of the most part of
results obtained in these articles.

Recall that an element $x$ of a lattice $\langle L;\ \vee,\wedge\rangle$ is
called \emph{distributive} if
$$\forall y,z\in L\colon\quad x\vee(y\wedge z)=(x\vee y)\wedge(x\vee z);$$
\emph{standard} if
$$\forall y,z\in L\colon\quad(x\vee y)\wedge z=(x\wedge z)\vee(y\wedge z);$$
\emph{neutral} if, for all $y,z\in L$, the sublattice of $L$ generated by
$x$, $y$, and $z$ is distributive. \emph{Codistributive} [\emph
{costandard}\,] elements are defined dually to distributive [respectively
standard] ones. An extensive information about elements of all these five
types in abstract lattices may be found in~\cite[Section~III.2]{Gratzer-98},
for instance. Note that any [co]standard element is [co]distributive, and an
element is neutral if and only if it is standard and costandard
simultaneously (see~\cite [Theorem~III.2.5]{Gratzer-98}, for instance). On
the other hand, a [co]distributive element may be not [co]standard, while a
[co]standard element may be not neutral.

A complete description of neutral elements in the lattice \textbf{SEM} has
been given in~\cite[Proposition~4.1]{Volkov-05} (see also~\cite[Theorem~14.2]
{Shevrin-Vernikov-Volkov-09}). In~\cite{Vernikov-Shaprynskii-distr}, all
distributive elements in \textbf{SEM} are completely determined. In~\cite
{Vernikov-codistr}, quite a strong necessary condition for semigroup
varieties to be a codistributive element in \textbf{SEM} is obtained. In
particular, all varieties with each of these three properties (except the
trivial extreme case of the variety $\mathcal{SEM}$ of all semigroups) turn
out to be periodic varieties.

So, an examination of special elements of all the mentioned types in the
lattice \textbf{SEM} gives no any information concerning the lattice \textbf
{OC}. Aiming to obtain some new knowledge about this lattice, it is natural
to investigate its special elements.

Such investigations have been started by the second author in~\cite
{Vernikov-01}. Five types of special elements (namely, distributive,
codistributive, standard, costandard, and neutral elements) in the lattice
\textbf{OC} have been considered there. Unfortunately, it turns out that
considerations in~\cite{Vernikov-01} contain a gap, and the main result of
this article is incorrect. Namely, it was proved in~\cite{Vernikov-01} that,
for an overcommutative semigroup variety, the properties of being a
distributive element of \textbf {OC}, of being a codistributive element of
\textbf {OC}, of being a standard element of \textbf {OC}, of being a
costandard element of \textbf{OC}, and of being a neutral element of \textbf
{OC} are equivalent. This result of~\cite{Vernikov-01} is true. But, besides
that, the main result of~\cite{Vernikov-01} contains a list of all
overcommutative varieties that possess the five mentioned properties.
Unfortunately, this list turns out to be non-complete. All varieties from the
list really have all the mentioned properties, but there are many other such
varieties. The objective of this article is to give a correct description of
distributive, codistributive, standard, costandard, and neutral elements in
the lattice \textbf{OC}.

The article is structured as follows. In Section~\ref{prel}, we introduce a
necessary notation and formulate the main result of the article (Theorem~\ref
{main}). In Section~\ref{order}, we prove several auxiliary facts.
Sections~\ref{nec} and~\ref{suff} are devoted to the proof of Theorem~\ref
{main}. In Section~\ref{optim}, we show that this theorem can not be
improved, in a sense. Finally, in Section~\ref{quest}, we formulate some open
problems.

\section{Preliminaries and summary}
\label{prel}

We denote by $F$ the free semigroup over a countably infinite alphabet
$\{x_1,x_2$, $\dots,x_n,\dots\}$. As usual, elements of $F$ are called \emph
{words}. By $F^1$ we denote the semigroup $F$ with the empty word ajoined.
The symbol $\equiv$ stands for the equality relation on $F$ and $F^1$. If $u$
is a word, then $\ell(u)$ denotes the length of $u$, $\ell_i(u)$ is the
number of occurrences of the letter $x_i$ in $u$, $c(u)$ stands for the set
of all letters occurring in $u$, and $n(u)=|c (u)|$ is the number of letters
occurring in $u$. An identity $u\approx v$ is called \emph{balanced} if
$\ell_i(u)=\ell_i(v)$ for all $i$. It is a common knowledge that if an
overcommutative variety satisfies some identity then this identity is
balanced.

Let $m$ and $n$ be integers with $2\le m\le n$. A \emph{partition of the
number $n$ into $m$ parts} is a sequence of positive integers $(\ell_1,
\ell_2,\dots,\ell_m)$ such that
$$\ell_1\ge\ell_2\ge\cdots\ge\ell_m\quad\text{and}\quad\sum_{i=1}^m\ell_i=n
\ldotp$$
The numbers $\ell_1,\ell_2,\dots,\ell_m$ are called \emph{components} of the
partition $\lambda$. We denote by $\Lambda_{n,m}$ the set of all partitions
of the number $n$ into $m$ parts and by $\Lambda$ the union of the sets
$\Lambda_{n,m}$ for all natural numbers $m$ and $n$ with $2\le m\le n$. If
$\lambda\in\Lambda_{n,m}$ then we denote the numbers $n$ and $m$ by $n
(\lambda)$ and $m(\lambda)$ respectively.

If $u$ is a word then we denote by $\partition(u)$ the partition of the
number $\ell(u)$ into $n(u)$ parts consisting of integers $\ell_i(u)$ for all
$i$ such that $x_i\in c(u)$ (the numbers $\ell_i(u)$ are placed in
$\partition(u)$ in non-increasing order). If $u\approx v$ is a balanced
identity then, obviously, $\ell(u)=\ell(v)$, $n(u)=n(v)$, and $\partition(u)=
\partition(v)$. We call the partition $\partition(u)$ a \emph{partition} of
the identity $u\approx v$. We denote the numbers $\ell(u)=\ell(v)$ and $n(u)=
n(v)$ by $\ell(u\approx v)$ and $n(u\approx v)$ respectively, and the
partition $\partition(u)=\partition(v)$ by $\partition(u\approx v)$.

Let $\lambda=(\ell_1,\ell_2,\dots,\ell_m)\in\Lambda_{n,m}$. We denote by
$W_{n,m,\lambda}$, or simply $W_\lambda$, the set of all words $u$ such that
$\ell(u)=n$, $c(u)=\{x_1,x_2,\dots,x_m\}$, $\ell_i(u)\ge\ell_{i+1}(u)$ for
all $i=1,2,\dots,m-1$, and $\partition(u)=\lambda$. It is evident that every
balanced identity $u\approx v$ with $\ell(u\approx v)=n$, $n(u\approx v)=m$,
and $\partition(u\approx v)=\lambda$ is equivalent to some identity $s\approx
t$ where $s,t\in W_{n,m,\lambda}$.

We call sets of the kind $W_{n,m,\lambda}$ \emph{transversals}. We say that
an overcommutative variety $\mathcal V$ \emph{reduces} [\emph{collapses}] a
transversal $W_{n,m,\lambda}$ if $\mathcal V$ satisfies some non-trivial
identity [all identities] of the kind $u\approx v$ with $u,v\in
W_{n,m,\lambda}$. An overcommutative variety $\mathcal V$ is said to be \emph
{greedy} if it collapses any transversal it reduces. The following assertion
has been proved in~\cite{Vernikov-01}.

\begin{proposition}
\label{greedy}
An overcommutative semigroup variety is a distributive \textup
[co\-distributive, standard, costandard, neutral\textup] element of the
lattice $\mathbf{OC}$ if and only if it is greedy.\qed
\end{proposition}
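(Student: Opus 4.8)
The plan is to establish both implications separately, exploiting the global structure of $\mathbf{OC}$ described by Volkov: the lattice decomposes as a subdirect product of intervals, each anti-isomorphic to the congruence lattice of a $G$-set, and these intervals correspond precisely to the transversals $W_{n,m,\lambda}$. The behaviour of an overcommutative variety $\mathcal V$ on a transversal $W_\lambda$ — whether it reduces it, collapses it, or leaves it intact — is exactly captured by its projection to the associated interval. So ``greedy'' should translate into an intrinsic coordinatewise condition on these projections, which is the right language for checking special-element identities.

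First I would prove that greedy varieties are neutral, hence (being self-dual as a condition) distributive, codistributive, standard, and costandard. The key observation is that for a greedy variety, the projection onto each coordinate interval is either the top element (if $\mathcal V$ does not reduce $W_\lambda$) or the bottom element (if $\mathcal V$ collapses $W_\lambda$). An element of a subdirect product all of whose projections are extreme (top or bottom) in the respective factors is automatically neutral: given any $\mathcal Y,\mathcal Z$, one checks the sublattice generated by $\mathcal V,\mathcal Y,\mathcal Z$ is distributive by verifying the relevant lattice identity coordinatewise, and in each coordinate $\mathcal V$ contributes either $\top$ or $\bot$, which collapses the three-generated sublattice to a chain-like distributive configuration. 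This step should be routine once the coordinatization is set up.

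The harder direction is the converse: if $\mathcal V$ is \emph{not} greedy, then $\mathcal V$ fails to be a distributive element (and a fortiori fails the stronger properties). By hypothesis there is a transversal $W_\lambda$ that $\mathcal V$ reduces but does not collapse. I would then work inside the single coordinate interval $I_\lambda$ anti-isomorphic to the congruence lattice of the $G$-set attached to $\lambda$: there the projection $\mathcal V_\lambda$ is a proper element, neither top nor bottom. The task is to produce varieties $\mathcal Y,\mathcal Z$, supported on this same transversal, witnessing a failure of distributivity — that is, $\mathcal V\vee(\mathcal Y\wedge\mathcal Z)\neq(\mathcal V\vee\mathcal Y)\wedge(\mathcal V\vee\mathcal Z)$. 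This reduces to exhibiting a non-distributive configuration in the congruence lattice $\operatorname{Con}(G\text{-set})$ involving a given non-extreme congruence; such configurations exist whenever the lattice is non-trivial, because any lattice with a proper element that is not distributive-forcing admits a pentagon or diamond through it, and one must check that the $G$-sets arising here are large enough to supply one. I expect \emph{this} to be the main obstacle: one must understand the structure of these particular $G$-sets (their orbits and stabilizers as $\lambda$ varies) well enough to guarantee that their congruence lattices are never so degenerate that every proper element is automatically distributive — equivalently, that a reduced-but-not-collapsed transversal always yields genuine non-distributivity.

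Having produced the bad triple $\mathcal Y,\mathcal Z$ inside the one offending coordinate, I would lift it back to $\mathbf{OC}$: set $\mathcal Y,\mathcal Z$ to agree with $\mathcal V$ on all other coordinates and to realize the chosen non-distributive configuration on coordinate $\lambda$. Since the lattice operations in a subdirect product are computed coordinatewise, the distributive law fails for $\mathcal V,\mathcal Y,\mathcal Z$ in $\mathbf{OC}$ precisely because it fails in $I_\lambda$. This shows $\mathcal V$ is not distributive, completing the contrapositive; the equivalences among the five special-element properties then follow from the general facts recalled in the introduction (standard $\Rightarrow$ distributive, neutral $\Leftrightarrow$ standard and costandard), together with the forward direction already proved.
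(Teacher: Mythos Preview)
The paper does not prove this proposition: it is stated without argument and attributed to \cite{Vernikov-01}, with the remark that the corresponding part of the proof in that paper is correct. There is thus no in-paper argument to compare your sketch against.

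That said, your strategy via Volkov's subdirect decomposition is the natural one, and your forward direction is sound: an element whose every coordinate projection is $\top$ or $\bot$ is neutral in any subdirect product, since the relevant lattice identities are checked coordinatewise and any sublattice generated by $\{\top\text{ or }\bot,\, y,\, z\}$ is distributive. Two points in the backward direction need tightening. First, in a subdirect product you cannot in general prescribe one coordinate while forcing the others to agree with $\mathcal V$; but you do not need to --- surjectivity of the $\lambda$-projection already supplies some $\mathcal Y,\mathcal Z\in\mathbf{OC}$ with the desired $\lambda$-coordinates, and a failure of the lattice identity in that single coordinate forces failure in $\mathbf{OC}$. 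Second, your closing logic is incomplete: ``not greedy $\Rightarrow$ not distributive'' together with ``greedy $\Rightarrow$ neutral'' yields distributive $\Leftrightarrow$ standard $\Leftrightarrow$ neutral $\Leftrightarrow$ greedy, but codistributive $\Rightarrow$ greedy still requires a separate (dual) witness, so you must also show that each proper $\mathcal V_\lambda$ fails \emph{co}distributivity in $I_\lambda$. The obstacle you explicitly flag --- that every proper element of each $I_\lambda$ is non-distributive (and, per the above, non-codistributive) --- is real and is where the specific combinatorics of these $G$-sets must be invoked; it does not follow from general lattice theory alone.
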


This assertion was not formulated in~\cite{Vernikov-01} explicitly but it
directly follows from the proof of Theorem~2 in~\cite{Vernikov-01} (and
the corresponding part of the proof in~\cite{Vernikov-01} is correct).

It is an appropriate place here to indicate the error made in~\cite
{Vernikov-01}. Let $m$ and $n$ be positive integers with $2\le m\le n$ and
$\lambda\in\Lambda_{n,m}$. A semigroup variety given by an identity system
$\Sigma$ is denoted by $\var\Sigma$. We put
\begin{align*}
&\mathcal X_n=\var\,\{u\approx v\mid\ \text{the identity}\ u\approx v\ \text
{is balanced and}\ \ell(u\approx v)\ge n\},\\
&\mathcal X_{n,m}=\mathcal X_{n+1}\wedge\var\,\{u\approx v\mid\ \text{the
identity}\ u\approx v\ \text{is balanced},\ \ell(u\approx v)=n,\\
&\phantom{\mathcal X_{n,m}=\mathcal X_{n+1}\wedge\var\,\{u\approx v\mid\ \ }
\text{and}\ n(u\approx v)\le m\},\\
&\mathcal X_{n,1}=\mathcal X_{n+1},\\
&\mathcal X_{n,m,\lambda}=\mathcal X_{n,m-1}\wedge\var\{u\approx v\mid u,v\in
W_{n,m,\lambda}\}\ldotp
\end{align*}
It is claimed in~\cite{Vernikov-01} without any proof that an overcommutative
variety is greedy if and only if it coincides with one of the varieties
$\mathcal{SEM}$, $\mathcal X_n$, $\mathcal X_{n,m}$ or $\mathcal
X_{n,m,\lambda}$. Combining this claim with Proposition~\ref{greedy}, we
obtain the main result of~\cite{Vernikov-01}: the varieties $\mathcal{SEM}$,
$\mathcal X_n$, $\mathcal X_{n,m}$, $\mathcal X_{n,m,\lambda}$, and only they
are [co]distributive, [co]standard, and neutral elements in \textbf{OC}. In
actual fact, it is true that all these varieties are elements of the
mentioned types in \textbf{OC}. But the list of [co]distributive,
[co]standard, and neutral elements in \textbf{OC} is not exhausted by the
varieties $\mathcal{SEM}$, $\mathcal X_n$, $\mathcal X_{n,m}$, and $\mathcal
X_{n,m,\lambda}$. There are many other varieties with such a property.
Exactly this fact has been so unfortunately overseen in~\cite{Vernikov-01}.

For a partition $\lambda=(\ell_1,\ell_2,\dots,\ell_m)\in\Lambda_{n,m}$, we
define numbers $q(\lambda)$, $r(\lambda)$, and $s(\lambda)$ by the following
way:
\begin{align*}
&q(\lambda)\ \text{is the number of}\ \ell_i\text{'s with}\ \ell_i=1\ \text
{(if}\ \ell_m>1\ \text{then}\ q(\lambda)=0\text{);}\\
&r(\lambda)\ \text{is the sum of all}\ \ell_i\text{'s with}\ \ell_i>1\ \text
{(if}\ \ell_1=1\ \text{then}\ r(\lambda)=0\text{);}\\
&s(\lambda)=\max\,\{r(\lambda)-q(\lambda)-\delta,0\}
\end{align*}
where
$$\delta=
\begin{cases}
0&\text{whenever}\ n=3,m=2,\ \text{and}\ \lambda=(2,1),\\
1&\text{otherwise}\ldotp
\end{cases}$$
If $k$ is a non-negative integer then $\lambda^k$ stands for the following
partition of $n+k$ into $m+k$ parts:
$$\lambda^k=(\ell_1,\ell_2,\dots,\ell_m,\underbrace{1,\dots,
1}_{k\ \text{times}})$$
(in particular, $\lambda^0=\lambda$).

For a partition $\lambda\in\Lambda_{n,m}$, we put
$$\mathcal W_{n,m,\lambda}=\var\,\{\,u\approx v\mid u,v\in W_{n,m,\lambda}\}
\quad\text{and}\quad\mathcal S_\lambda=\bigwedge_{i=0}^{s(\lambda)}\mathcal
W_{n+i,m+i,\lambda^i}\ldotp$$
Sometimes we will write $\mathcal W_\lambda$ rather than $\mathcal
W_{n,m,\lambda}$.

The main result of the article is the following

\begin{theorem}
\label{main}
For an overcommutative semigroup variety $\mathcal V$, the following are
equivalent:
\begin{itemize}
\item[(i)]$\mathcal V$ is a distributive element of the lattice $\mathbf
{OC}$;
\item[(ii)]$\mathcal V$ is a codistributive element of the lattice $\mathbf
{OC}$;
\item[(iii)]$\mathcal V$ is a standard element of the lattice $\mathbf{OC}$;
\item[(iv)]$\mathcal V$ is a costandard element of the lattice $\mathbf{OC}$;
\item[(v)]$\mathcal V$ is a neutral element of the lattice $\mathbf{OC}$;
\item[(vi)]either $\mathcal{V=SEM}$ or $\mathcal V=\bigwedge\limits_{i=1}^k
\mathcal S_{\lambda_i}$ for some partitions $\lambda_1,\lambda_2,\dots,
\lambda_k\in\Lambda$.
\end{itemize}
\end{theorem}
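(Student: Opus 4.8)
The plan is to deduce everything from Proposition~\ref{greedy}, which already gives the equivalence of (i)--(v) and reduces the theorem to the single implication: $\mathcal V$ is greedy $\iff$ (vi). The preliminary observation is a normal form for greedy varieties. Using Volkov's subdirect decomposition of $\mathbf{OC}$ into intervals (one may check these are naturally indexed by the transversals, the interval at $W_{n,m,\lambda}$ controlling exactly the identities whose two sides lie in $W_{n,m,\lambda}$), a greedy variety $\mathcal V$ has, at each transversal $T$, a local coordinate that is either the bottom or the top of the corresponding interval, according to whether $\mathcal V$ fails to reduce $T$ or collapses $T$. Hence $\mathcal V$ is determined by the set $\mathcal C(\mathcal V)$ of transversals it collapses, and in fact $\mathcal V=\bigwedge\{\mathcal W_T\mid T\in\mathcal C(\mathcal V)\}$; moreover this meet is again greedy with the same collapsed set, so two greedy varieties with the same collapsed set coincide. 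We also record the two elementary propagation facts used throughout: if $\mathcal V$ collapses $W_{n,m,\lambda}$ then multiplying the collapse-identities by a new letter shows that $\mathcal V$ reduces $W_{n+1,m+1,\lambda^1}$, and identifying two letters shows that $\mathcal V$ reduces various transversals obtained from $W_{n,m,\lambda}$ by merging parts of $\lambda$. Section~\ref{order} will package these into a quasi-order on transversals.

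For (vi)$\Rightarrow$ greedy it suffices, by the normal form, to show that $\mathcal{SEM}$ is greedy (vacuously so), that each $\mathcal S_\lambda$ is greedy, and that a meet of greedy varieties is greedy. The last point follows from the coordinate description of meets: the coordinate of $\mathcal A\wedge\mathcal B$ at $T$ is the join, in the corresponding interval, of the coordinates of $\mathcal A$ and of $\mathcal B$ at $T$ together with whatever is forced into it by the top coordinates of $\mathcal A$ and $\mathcal B$ at other transversals; if $\mathcal A,\mathcal B$ are greedy then every coordinate entering this expression is a bottom or a top, so the result is a bottom or a top, i.e.\ $\mathcal A\wedge\mathcal B$ is greedy.

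The substantial part is the greediness of $\mathcal S_\lambda$: one must show that $\mathcal S_\lambda$ collapses every transversal it reduces. By construction $\mathcal S_\lambda$ collapses $W_{n+i,m+i,\lambda^i}$ for $0\le i\le s(\lambda)$; the key claim is that these $s(\lambda)+1$ collapses \emph{force} the collapse of $W_{n+k,m+k,\lambda^k}$ for every $k>s(\lambda)$ and, together with the merging facts, of every further transversal $\mathcal S_\lambda$ reduces, whereas collapsing only $W_{n+i,m+i,\lambda^i}$ for $0\le i\le s(\lambda)-1$ does \emph{not} force the collapse of $W_{n+s(\lambda),m+s(\lambda),\lambda^{s(\lambda)}}$. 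Concretely one studies how, given a word $u\in W_{n+k,m+k,\lambda^k}$, one may transform it into a fixed word of the same transversal by repeatedly applying the available collapse-identities (after substituting letters by letters and multiplying by a letter): the $r(\lambda)$ occurrences coming from the parts $>1$ of $\lambda$ must be transported past the $q(\lambda)+k$ occurrences coming from the parts equal to $1$, and a count of how many of the latter one must keep in reserve for the transport to close up produces exactly the bound $r(\lambda)-q(\lambda)$, the correction $\delta$ absorbing the single small transversal $W_{3,2,(2,1)}$, where the transport closes up one step sooner. I expect this double-sided count — enough collapses to trigger, one too few not to — to be the main obstacle, and the off-by-one governed by $\delta$ shows how little slack there is.

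For greedy $\Rightarrow$ (vi), let $\mathcal V$ be greedy with $\mathcal V\neq\mathcal{SEM}$, so $\mathcal C(\mathcal V)\neq\varnothing$. For each $W_{a,b,\mu}\in\mathcal C(\mathcal V)$, greediness together with the propagation facts and the analysis just sketched forces $\mathcal V$ to collapse every transversal that $\mathcal S_\mu$ collapses (each such transversal is reduced by $\mathcal V$, hence collapsed), so $\mathcal V\subseteq\mathcal S_\mu$; thus $\mathcal V\subseteq\bigwedge\{\mathcal S_\mu\mid W_{a,b,\mu}\in\mathcal C(\mathcal V)\}$. Conversely each such $\mathcal S_\mu$ has $\mathcal C(\mathcal S_\mu)\subseteq\mathcal C(\mathcal V)$, so the displayed meet collapses every transversal in $\mathcal C(\mathcal V)$; being greedy (by the part already proved) and having the same collapsed set as $\mathcal V$, it equals $\mathcal V$. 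It remains to pass from this \textit{a priori} infinite meet to a finite one: using the quasi-order on transversals from Section~\ref{order}, one shows that $\mathcal C(\mathcal V)$, being closed under the reduction-propagation above, is generated by its finitely many ``source'' transversals — the forcing-closure of a single transversal is already large, of the form $\mathcal C(\mathcal S_\mu)$, and the admissible sources form an antichain, of which there are only finitely many. Keeping one $\mathcal S_{\lambda_i}$ per source yields $\mathcal V=\bigwedge_{i=1}^{k}\mathcal S_{\lambda_i}$, which is (vi).
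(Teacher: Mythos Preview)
Your overall strategy is sound and lands on the same destination as the paper, but the route differs in two visible places.

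First, the paper explicitly \emph{does not} use Volkov's subdirect decomposition (it says so in Section~\ref{intr}). Everything you extract from it can be obtained directly from the deduction calculus: a greedy $\mathcal V$ is determined by $\mathcal C(\mathcal V)$ because $\mathcal V$ satisfies exactly the balanced identities whose partition lies in $\mathcal C(\mathcal V)$; and ``meet of greedy is greedy'' follows by noting that in any deduction of a balanced identity from the union of the identity bases of $\mathcal A$ and $\mathcal B$, each intermediate step stays in the same transversal and is itself an identity of $\mathcal A$ or of $\mathcal B$, so $\mathcal A\wedge\mathcal B$ cannot reduce a transversal that neither $\mathcal A$ nor $\mathcal B$ reduces. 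Your phrase ``together with whatever is forced into it by the top coordinates at other transversals'' is therefore superfluous: in a subdirect product the meet is genuinely coordinatewise, and the preceding sentence shows no cross-transversal forcing occurs. The paper instead gets closure under meets by the slick two-liner: greedy $\Leftrightarrow$ neutral (Proposition~\ref{greedy}), and neutral elements always form a sublattice. Your argument is more hands-on; theirs is shorter but imports a lattice-theoretic fact.

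Second, the ``one too few not to'' half of your double-sided count is \emph{not} needed for Theorem~\ref{main}. Only the forward direction --- that collapsing $W_{\lambda^0},\dots,W_{\lambda^{s(\lambda)}}$ already forces the collapse of every $W_{\lambda^k}$ and hence (via merging) of every $W_\mu$ with $\lambda\preceq\mu$ --- is required to prove $\mathcal S_\lambda$ greedy. The optimality of $s(\lambda)$ is a separate statement (Proposition~\ref{optimum}), proved in Section~\ref{optim} after the theorem. So you can drop that half from your plan for the main result.

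Otherwise your sketch matches the paper's: the combinatorics you describe for ``$\mathcal S_\lambda$ is greedy'' is exactly Lemma~\ref{s(lambda)=0} (the case analysis on where the simple letters sit, driven by the inequality $r\le q+1$), and your finiteness step is precisely Lemmas~\ref{min condition} and~\ref{antichains} (descending chain condition plus no infinite antichains for $\preceq$), used just as you say to pass from $\bigwedge_{\mu\in\mathcal C(\mathcal V)}\mathcal S_\mu$ to a finite sub-meet over the $\preceq$-minimal elements.
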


The following claim was formulated in \cite{Vernikov-01} as a corollary of
the main result of that article. Theorem \ref{main} shows that the claim is
correct.

\begin{corollary}
\label{countable}
The set of all \textup[co\textup]distributive elements of the lattice
$\mathbf{OC}$ is countably infinite.\qed
\end{corollary}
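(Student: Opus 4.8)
The corollary is an immediate consequence of Theorem~\ref{main}, so the plan is short. By the equivalence of items~(i), (ii) and~(vi), the set of [co]distributive elements of $\mathbf{OC}$ is exactly $\{\mathcal{SEM}\}$ together with the family of all varieties $\bigwedge_{i=1}^{k}\mathcal S_{\lambda_i}$ with $k\ge1$ and $\lambda_1,\dots,\lambda_k\in\Lambda$. It therefore remains to check two things: that this family is at most countable, and that it is infinite.

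For countability, I would first observe that $\Lambda$ is a countable set. Indeed, each $\Lambda_{n,m}$ is finite, there being only finitely many partitions of the integer $n$, and $\Lambda$ is the union of the $\Lambda_{n,m}$ over the countably many pairs $(n,m)$ with $2\le m\le n$. Since the meet operation is commutative, associative and idempotent, a variety of the form $\bigwedge_{i=1}^{k}\mathcal S_{\lambda_i}$ depends only on the \emph{finite set} $\{\lambda_1,\dots,\lambda_k\}\subseteq\Lambda$; as a countable set has only countably many finite subsets, there are only countably many such varieties, and adjoining the single variety $\mathcal{SEM}$ keeps the total countable.

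For infinitude, I would exhibit an explicit infinite chain of [co]distributive elements of $\mathbf{OC}$. A convenient choice is the chain $\mathcal X_2\subseteq\mathcal X_3\subseteq\cdots$: each $\mathcal X_n$ is a [co]distributive element of $\mathbf{OC}$ (as recalled in Section~\ref{prel} from~\cite{Vernikov-01}, where $\mathcal{SEM}$, $\mathcal X_n$, $\mathcal X_{n,m}$ and $\mathcal X_{n,m,\lambda}$ are all stated to have this property), and the inclusions are strict. Strictness is easy to see directly: for each $n$ one can produce a nilpotent semigroup that satisfies every balanced identity of length $\ge n+1$ (since under any substitution both sides collapse to zero) yet violates a suitable balanced identity of length $n$, so it lies in $\mathcal X_{n+1}\setminus\mathcal X_n$. (Alternatively one may take $\lambda_n=(1,1,\dots,1)\in\Lambda_{n,n}$, note that $s(\lambda_n)=0$, hence $\mathcal S_{\lambda_n}=\mathcal W_{n,n,\lambda_n}$, and check that these are pairwise distinct.) Combining the two bounds gives that the set of [co]distributive elements of $\mathbf{OC}$ is countably infinite; there is no substantive obstacle here, the only point deserving a line of attention being the strictness of the chain, which is already within reach of the methods of~\cite{Vernikov-01}.
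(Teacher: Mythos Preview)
Your proposal is correct and follows the same route as the paper: the corollary is an immediate consequence of Theorem~\ref{main}, and the paper records no proof beyond the \qed\ symbol. Your argument simply spells out the obvious details---countability of $\Lambda$ and of its finite subsets, and an explicit infinite family of neutral elements---none of which the paper felt necessary to include.
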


This corollary is of some interest because the set of all overcommutative
semigroup varieties is well known to be uncountably infinite. On the other
hand, it is interesting to note that the set of all neutral elements in the
lattice \textbf{OC} is infinite, while the set of all neutral elements in the
lattice \textbf{SEM} consists of 5 varieties only~\cite[Proposition~4.1]
{Volkov-05}.

In view of Proposition~\ref{greedy}, Theorem~\ref{main} is equivalent to the
following

\begin{proposition}
\label{reduce}
An overcommutative semigroup variety $\mathcal V$ satisfies the
condition~\textup{(vi)} of Theorem~\textup{\ref{main}} if and only if it is
greedy.
\end{proposition}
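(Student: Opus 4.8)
The plan is to prove Proposition~\ref{reduce} directly; by Proposition~\ref{greedy} it is equivalent to Theorem~\ref{main}, so it suffices to show that an overcommutative variety is greedy precisely when it satisfies~(vi). I would build on three elementary observations. (a)~\emph{An arbitrary meet of greedy varieties is greedy.} Every identity of an overcommutative variety is balanced, and a formal deduction of a balanced identity from balanced ones proceeds only through words having the same content and the same multiplicities of letters. Hence, writing $\mathcal V_\alpha=\var\Sigma_\alpha$, if $s\approx t$ with $s,t\in W_{n,m,\mu}$ follows from $\bigcup_\alpha\Sigma_\alpha$, there is a chain $s\equiv w_0,w_1,\dots,w_k\equiv t$ of words of $W_{n,m,\mu}$ each step of which is justified by a single $\mathcal V_\alpha$, so that $\mathcal V_\alpha\models w_i\approx w_{i+1}$. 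If no $\mathcal V_\alpha$ reduces $W_{n,m,\mu}$ every step is trivial and $s\equiv t$; if some $\mathcal V_\alpha$ collapses $W_{n,m,\mu}$, then so does $\bigwedge_\alpha\mathcal V_\alpha$. (b)~Since $W_{n,m,\mu}$ has at least two elements, a variety collapsing it satisfies $ux_{m+1}\approx vx_{m+1}$ for distinct $u,v\in W_{n,m,\mu}$ and therefore reduces $W_{n+1,m+1,\mu^1}$; iterating, a \emph{greedy} variety collapsing $W_{n,m,\mu}$ collapses $W_{n+k,m+k,\mu^k}$ for every $k$, and so is contained in $\mathcal S_\mu$. (c)~The property of collapsing a given transversal passes to subvarieties, so $\mathcal S_\lambda$ collapses $W_{n+j,m+j,\lambda^j}$ for each $j\le s(\lambda)$, and likewise every transversal collapsed by one of these $\mathcal W_{n+j,m+j,\lambda^j}$.

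In view of~(a) and the (vacuous) greediness of $\mathcal{SEM}$, the implication (vi)$\Rightarrow$greedy reduces to the single assertion that \emph{each $\mathcal S_\lambda$ is greedy}, and this is the combinatorial heart of the matter and the step I expect to be the main obstacle. It amounts to analysing, for the basis $\Sigma_\lambda=\{\,u\approx v\mid u,v\in W_{n+j,m+j,\lambda^j},\ 0\le j\le s(\lambda)\,\}$ of $\mathcal S_\lambda$, the equivalence relation that $\mathcal S_\lambda$ induces on an \emph{arbitrary} transversal, and showing it is always the diagonal or the full relation. The mechanism I would try to make precise is the following: collapsing $W_{n,m,\lambda}$ only \emph{partially} collapses $W_{n+1,m+1,\lambda^1}$, the blocks being governed roughly by the left-to-right order of the letters occurring once; but adjoining the collapses of $W_{\lambda^1},\dots,W_{\lambda^{s(\lambda)}}$ supplies exactly the moves that were missing (transpositions of two once-occurring letters in the presence of a repeated letter), so that every transversal reachable from $W_\lambda$ becomes fully collapsed and the descending chain $\mathcal W_\lambda\supseteq\mathcal W_\lambda\wedge\mathcal W_{\lambda^1}\supseteq\cdots$ becomes stationary. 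The delicate part is to show that it becomes stationary at \emph{exactly} the index $s(\lambda)=\max\{r(\lambda)-q(\lambda)-\delta,0\}$, and in particular that the partition $(2,1)$ genuinely requires one step more than the generic count, which is where the constant $\delta$ comes from; granting this, the remaining work in Sections~\ref{nec}--\ref{suff} should be bookkeeping.

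For the converse, suppose $\mathcal V$ is greedy and $\mathcal V\ne\mathcal{SEM}$. Then $\mathcal V$ satisfies a non-trivial, hence balanced, identity, so it reduces — and being greedy, collapses — some transversal; let $\mathcal R$ be the non-empty set of transversals it collapses. Every non-trivial identity of $\mathcal V$ is equivalent to one between words of a single transversal $W_\mu$, which $\mathcal V$ then reduces and therefore collapses, so the identity follows from the collapse of $W_\mu$; this gives $\mathcal V=\bigwedge_{W_\mu\in\mathcal R}\mathcal W_\mu$. By~(b), $\mathcal V\subseteq\mathcal S_\mu$ for every $W_\mu\in\mathcal R$, hence $\mathcal V=\bigwedge_{W_\mu\in\mathcal R}\mathcal S_\mu$. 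Finally, order partitions by the embedding order $\preceq$ (a partition being regarded as the multiset of its parts): it is a well-quasi-order by Higman's lemma, and if $\mu\preceq\nu$ then any variety collapsing $W_\mu$ reduces $W_\nu$, so $\mathcal R$ is a $\preceq$-up-set and has only finitely many $\preceq$-minimal members $W_{\mu_1},\dots,W_{\mu_k}$. Since each $\mathcal S_{\mu_i}$ is greedy — already established — it is the largest greedy variety collapsing $W_{\mu_i}$, and one checks that $\bigwedge_{W_\mu\in\mathcal R}\mathcal S_\mu=\bigwedge_{i=1}^{k}\mathcal S_{\mu_i}$, which is a variety of the form allowed by~(vi). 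This completes the proof.
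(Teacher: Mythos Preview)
Your overall architecture matches the paper's: reduce (vi)$\Rightarrow$greedy to the single claim that each $\mathcal S_\lambda$ is greedy, and for greedy$\Rightarrow$(vi) show $\mathcal V=\bigwedge_\mu\mathcal S_\mu$ over the collapsed transversals and then cut down to finitely many minimal ones. Two of your side arguments are genuine alternatives to the paper's: your direct deduction-chain proof that a meet of greedy varieties is greedy replaces the paper's detour through Proposition~\ref{greedy} plus the fact that neutral elements form a sublattice; and your appeal to Higman's lemma replaces the paper's hands-on verification (Lemma~\ref{antichains}) that $\langle\Lambda,\preceq\rangle$ has no infinite antichains. Both shortcuts are legitimate.

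But the proposal is not a proof: you explicitly leave the assertion ``$\mathcal S_\lambda$ is greedy'' as an obstacle and offer only a heuristic. That assertion is the entire content of the result, and your heuristic does not discharge it. The paper's argument (Proposition~\ref{S_lambda is greedy}) is not the ``transpositions of once-occurring letters'' picture you sketch; it splits into two independent halves. First (Lemma~\ref{lambda less u=v}), any nontrivial identity of $\mathcal S_\lambda$ has partition $\mu$ with $\lambda\preceq\mu$ in the paper's order, i.e.\ $\mu=U_S(\lambda^k)$; this is proved by dissecting a single deduction step $a\zeta(s)b$. Second (Corollary~\ref{S_lambda in W_lambda^k} together with Lemma~\ref{W_lambda in W_mu}), $\mathcal S_\lambda\subseteq\mathcal W_\mu$ for every such $\mu$; the nontrivial input here is Lemma~\ref{s(lambda)=0}, a word-by-word case analysis showing that once $s(\lambda)=0$ the variety $\mathcal W_\lambda$ already collapses every $W_{\lambda^k}$, the cases being determined by whether a simple letter occurs first, last, or in an adjacent pair. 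Your heuristic about a stationary descending chain points in the right direction but supplies neither of these ingredients.

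Two smaller points. You describe the ``delicate part'' as showing the chain stabilises at \emph{exactly} $s(\lambda)$, including the exceptional $(2,1)$ case. For Proposition~\ref{reduce} this is irrelevant: you only need $\mathcal S_\lambda$, as defined, to be greedy, which amounts to stabilisation at or before $s(\lambda)$; exactness is a separate optimality statement handled in Section~\ref{optim}. Second, your reduction to finitely many minimal $\mu_i$ in the converse uses $\mu_i\preceq\mu\Rightarrow\mathcal S_{\mu_i}\subseteq\mathcal S_\mu$ (the paper's Corollary~\ref{S_lambda in S_mu}); the proof of that implication also rests on the greediness of $\mathcal S_{\mu_i}$, so both directions of your argument depend on the lemma you have not proved.
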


It is this claim that will be verified in Sections~\ref{nec} and~\ref{suff}
(in fact, we prove the `only if' and `if' parts of Proposition~\ref{reduce}
in Sections~\ref{nec} and~\ref{suff} respectively). To prepare this proof, we
introduce some order relation on the set $\Lambda$ and consider some
properties of this relation in Section~\ref{order}.

\section{An order relation on the set $\Lambda$}
\label{order}

Let $\lambda=(\ell_1,\ell_2,\dots,\ell_m)\in\Lambda_{n,m}$ where $m\ge3$ and
$1\le i<j\le m$. We denote by $U_{i,j}(\lambda)$ the partition of the number
$n$ into $m-1$ parts with the components $\ell_1$, $\ell_2$, \dots,
$\ell_{i-1}$, $\ell_{i+1}$, \dots, $\ell_{j-1}$, $\ell_{j+1}$, \dots,
$\ell_m$, and $\ell_i+\ell_j$ (these components are written in $U_{i,j}
(\lambda)$ in non-increasing order). We will say that the partition $U_{i,j}
(\lambda)$ is obtained from $\lambda$ by the \emph{union of components}
$\ell_i$ and $\ell_j$. The partitioin obtained from $\lambda$ by a finite
(may be empty) set $S$ of unions of components is denoted by $U_S(\lambda)$;
in particular, $U_\varnothing(\lambda)=\lambda$.

We introduce a binary relation $\preceq$ on the set $\Lambda$ by the
following rule:
$$\lambda\preceq\mu\ \text{if and only if}\ \mu=U_S\bigl(\lambda^k\bigr)\
\text{for some}\ S\ \text{and}\ k\ldotp$$
The principal property of the relation $\preceq$ is given by the following

\begin{lemma}
\label{le is order}
The relation $\preceq$ is a partial order on the set $\Lambda$.
\end{lemma}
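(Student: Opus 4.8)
To show that $\preceq$ is a partial order, I must verify reflexivity, antisymmetry, and transitivity. Reflexivity is immediate: taking $k=0$ and $S=\varnothing$ gives $\lambda=U_\varnothing(\lambda^0)$, so $\lambda\preceq\lambda$. The real work lies in transitivity and antisymmetry, and for both it will be convenient to first isolate a couple of structural observations about how the operations $\lambda\mapsto\lambda^k$ and $\lambda\mapsto U_{i,j}(\lambda)$ interact.

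\smallskip

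\emph{Transitivity.} Suppose $\lambda\preceq\mu$ and $\mu\preceq\nu$, say $\mu=U_S(\lambda^k)$ and $\nu=U_T(\mu^l)=U_T\bigl((U_S(\lambda^k))^l\bigr)$. The key point I would establish is a commutation lemma: appending $l$ ones and then performing the unions in $T$ can be rewritten as first appending ones and then performing a single (larger) set of unions applied to $\lambda^{k+l}$. Concretely, $(U_S(\lambda^k))^l$ is obtained from $\lambda^{k+l}$ by performing the unions in $S$ among the first $n$ (original) positions — the appended ones are never touched by $S$ — so $(U_S(\lambda^k))^l=U_{S'}(\lambda^{k+l})$ where $S'$ is $S$ re-indexed appropriately. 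Then applying $U_T$ on top just enlarges the set of unions: $U_T(U_{S'}(\lambda^{k+l}))=U_{S'\cup T'}(\lambda^{k+l})$ for a suitable relabelling $T'$, since a composite of unions of components is again a union of components (the order in which one merges blocks of a partition into a coarser partition is irrelevant — this is essentially the statement that coarsenings of a set partition compose). Hence $\nu=U_{S'\cup T'}(\lambda^{k+l})$, giving $\lambda\preceq\nu$. I would phrase this cleanly by thinking of a union-set $S$ as determining a set partition of the index set $\{1,\dots,m+k\}$, with $U_S(\lambda^k)$ the partition whose components are the block-sums; composition of such operations then corresponds to refinement of the associated set partitions, which is manifestly associative.

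\smallskip

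\emph{Antisymmetry.} This is where I expect the main obstacle, because one must rule out nontrivial "cycles." Suppose $\lambda\preceq\mu$ and $\mu\preceq\lambda$. First note the numerical constraints: if $\lambda\in\Lambda_{n,m}$ and $\mu=U_S(\lambda^k)$ with $|S|=t$ unions, then $n(\mu)=n(\lambda)+k$ and $m(\mu)=m(\lambda)+k-t$. From $\lambda\preceq\mu$ and $\mu\preceq\lambda$ we get $n(\lambda)\le n(\mu)$ and $n(\mu)\le n(\lambda)$, hence $n(\lambda)=n(\mu)$, which forces $k=0$ in the representation $\mu=U_S(\lambda^0)=U_S(\lambda)$ (and likewise $k=0$ in the other direction). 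So $\mu$ is obtained from $\lambda$ purely by unions of components, and vice versa. But a union of components can only decrease (or preserve) the number of parts: $m(\mu)\le m(\lambda)$ and $m(\lambda)\le m(\mu)$, so $m(\lambda)=m(\mu)$, which forces $S=\varnothing$ (no unions at all). Therefore $\mu=\lambda$. The only thing that needs care here is confirming that $U_{i,j}$ with $i<j$ genuinely reduces the part count by exactly one and never increases it — i.e. that "union of components" cannot split a part — which is clear from the definition, and that the composite $U_S$ of $t$ such unions reduces the count by exactly $t$ (each union merges two currently-distinct blocks). Once the bookkeeping on $n$ and $m$ is pinned down, antisymmetry follows without touching the actual component values.

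\smallskip

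So the proof reduces to: (1) reflexivity — trivial; (2) a commutation/associativity lemma for union-sets phrased via set partitions of the index set, yielding transitivity; (3) a counting argument on $n(\cdot)$ and $m(\cdot)$ yielding antisymmetry. The delicate part is making the re-indexing in step (2) precise enough to be convincing without being notationally unwieldy; I would handle it by working with the associated set partition of $\{1,2,\dots,n(\lambda)+k\}$ rather than with the components directly.
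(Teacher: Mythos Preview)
Your proposal is correct and follows essentially the same route as the paper: reflexivity via $\lambda=U_\varnothing(\lambda^0)$; antisymmetry by first using $n(\cdot)$ to force $k=\ell=0$ and then the part count $m(\cdot)$ to force $S=\varnothing$; and transitivity via $\nu=U_{S\cup T}(\lambda^{k+\ell})$ after the commutation $(U_S(\lambda^k))^\ell=U_{S'}(\lambda^{k+\ell})$. The paper simply declares the transitivity step ``evident'' and writes $\nu=U_{S\cup T}(\lambda^{k+\ell})$ without spelling out the re-indexing, so your plan is in fact more careful on exactly the point you flagged as delicate (one small slip: $S$ acts on the first $m+k$ positions of $\lambda^{k+\ell}$, not the first $n$).
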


\begin{proof}
Reflexivity of $\preceq$ is evident because $\lambda=U_\varnothing\bigl
(\lambda^0\bigr)$. The claim that $\preceq$ is transitive also is evident
because if $\mu=U_S\bigl(\lambda^k\bigr)$ and $\nu=U_T\bigl(\mu^\ell\bigr)$
then $\nu=U_{S\cup T}\bigl(\lambda^{k+\ell}\bigr)$. To prove that $\preceq$
is antisymmetric, we suppose that $\lambda\preceq\mu$ and $\mu\preceq\lambda$
for some $\lambda,\mu\in\Lambda$. Then $\mu=U_S\bigl(\lambda^k\bigr)$ and
$\lambda=U_T\bigl(\mu^\ell\bigr)$ for some $S$, $T$, $k$, and $\ell$. Let
$n(\lambda)=n$ and $n(\mu)=q$. Then $q=n+k$ and $n=q+\ell$. Therefore, $q=q+
k+\ell$, whence $k+\ell=0$. This means that $k=\ell=0$. Thus, $\mu=U_S
(\lambda)$ and $\lambda=U_T(\mu)$. If $S\ne\varnothing$ then $r<m$. But $m\le
r$ because $\lambda$ is obtained from $\mu$ by unions of components.
Therefore, $S=\varnothing$ and $\mu=U_\varnothing(\lambda)=\lambda$.
\end{proof}

Now we are going to show that the partial order $\preceq$ has some nice
properties that will be played the crucial role in Section~\ref{suff}. The
first such property is given by the following

\begin{lemma}
\label{min condition}
The partially ordered set $\langle\Lambda;\preceq\rangle$ satisfies the
descending chain condition.
\end{lemma}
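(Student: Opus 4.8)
The plan is to show that any strictly descending chain $\lambda^{(0)} \succ \lambda^{(1)} \succ \lambda^{(2)} \succ \cdots$ in $\langle\Lambda;\preceq\rangle$ must be finite. The key is to find a quantity attached to each partition that cannot decrease indefinitely along such a chain. Recall that $\mu \preceq \lambda$ (with $\mu \ne \lambda$) means $\lambda = U_S(\mu^k)$ for some $S$ and $k$; passing from $\mu$ to $\mu^k$ adds $k$ parts equal to $1$ (increasing both $n$ and $m$ by $k$), and each union of components in $S$ decreases the number of parts by $1$ while leaving $n$ unchanged. So along the whole operation $n(\lambda) = n(\mu) + k$ and $m(\lambda) = m(\mu) + k - |S|$.

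First I would observe that $n$ is monotone: if $\mu \prec \lambda$ then $n(\mu) \le n(\lambda)$, since $n(\lambda) = n(\mu)+k \ge n(\mu)$. Hence along a descending chain the values $n(\lambda^{(t)})$ form a non-increasing sequence of positive integers, so they are eventually constant, say equal to $N$ for all $t \ge t_0$. From $t_0$ on, every step has $k = 0$, i.e. $\lambda^{(t+1)} = U_{S_t}(\lambda^{(t)})$ with $\lambda^{(t+1)} \prec \lambda^{(t)}$ forcing $S_t \ne \varnothing$, so $m(\lambda^{(t+1)}) < m(\lambda^{(t)})$. Thus from $t_0$ onward the number of parts strictly decreases at each step; but it is a positive integer bounded below by $2$, so the chain terminates. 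This proves the descending chain condition.

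The main thing to get right is the bookkeeping showing $n$ is monotone and that once $n$ stabilizes only pure unions of components (no further $\lambda \mapsto \lambda^k$ with $k>0$) can occur — essentially a repeat of the antisymmetry argument in Lemma~\ref{le is order}, applied along the tail of the chain rather than to a single back-and-forth pair. I would phrase it as: suppose $\lambda = U_T(\mu^\ell)$ with $n(\lambda) = n(\mu)$; then $n(\mu) + \ell = n(\lambda) = n(\mu)$ gives $\ell = 0$, so $\lambda = U_T(\mu)$, and if $\lambda \ne \mu$ then $T \ne \varnothing$ and $m(\lambda) = m(\mu) - |T| < m(\mu)$. No real obstacle here — the only care needed is to handle the initial segment of the chain (where $n$ may genuinely drop) separately from the tail.

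\begin{proof}
Suppose, to the contrary, that there is an infinite strictly descending chain
$$\lambda^{(0)}\succ\lambda^{(1)}\succ\lambda^{(2)}\succ\cdots$$
in $\langle\Lambda;\preceq\rangle$. Put $n_t=n\bigl(\lambda^{(t)}\bigr)$ and $m_t=m\bigl(\lambda^{(t)}\bigr)$.

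For each $t$ we have $\lambda^{(t+1)}=U_{S_t}\bigl((\lambda^{(t)})^{k_t}\bigr)$ for some finite set $S_t$ of unions of components and some non-negative integer $k_t$. Adjoining $k_t$ components equal to $1$ increases the number of parts (and the sum) by $k_t$, and each union of components decreases the number of parts by $1$ while keeping the sum fixed. Hence
$$n_{t+1}=n_t+k_t\quad\text{and}\quad m_{t+1}=m_t+k_t-|S_t|\ldotp$$
In particular $n_{t+1}\ge n_t$ for all $t$, so the sequence $n_0\le n_1\le n_2\le\cdots$ of positive integers is non-decreasing. On the other hand, each $\lambda^{(t)}$ lies in some $\Lambda_{n_t,m_t}$, so $2\le m_t\le n_t$; but a non-decreasing sequence of positive integers that is bounded above (here by no fixed bound yet, so we argue differently).

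Since each step either strictly increases $n$ (when $k_t>0$) or keeps $n$ fixed and performs a non-empty set of unions (when $k_t=0$, in which case $\lambda^{(t+1)}\ne\lambda^{(t)}$ forces $S_t\ne\varnothing$, hence $m_{t+1}<m_t$), we consider two cases. If $k_t>0$ for infinitely many $t$, then $n_t\to\infty$, contradicting nothing directly; instead note that then the chain, restricted to those steps, shows $n_t$ is unbounded, which is consistent. To rule this out we argue as follows: along the chain we always have $m_{t+1}\le m_t+k_t=m_t+(n_{t+1}-n_t)$, that is, $m_{t+1}-n_{t+1}\le m_t-n_t$, with equality if and only if $S_t=\varnothing$, i.e. $\lambda^{(t+1)}=(\lambda^{(t)})^{k_t}$. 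Thus the sequence $m_t-n_t$ is non-increasing. Since $m_t-n_t\le0$, and since a step with $S_t=\varnothing$ and $k_t=0$ would give $\lambda^{(t+1)}=\lambda^{(t)}$, contrary to strictness, each step has either $S_t\ne\varnothing$ or $k_t>0$. If $S_t\ne\varnothing$ for infinitely many $t$, then $m_t-n_t$ strictly decreases infinitely often, so $m_t-n_t\to-\infty$; as $m_t\ge2$, this forces $n_t\to\infty$. If instead $S_t=\varnothing$ for all $t\ge t_1$, then $k_t>0$ for all $t\ge t_1$, so again $n_t\to\infty$. In either case $n_t\to\infty$.

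Now recall (this is where the argument of Lemma~\ref{le is order} is reused) that $\preceq$ is a partial order, so the chain being strictly descending means in particular that $\lambda^{(t)}\ne\lambda^{(s)}$ for $t\ne s$. But $n_t\to\infty$ together with $m_t\le n_t$ places no immediate bound; the contradiction instead comes from the following finer observation. Fix $t$. Since $\lambda^{(t+1)}\prec\lambda^{(t)}$ and, by the above, $m_{t+1}-n_{t+1}\le m_t-n_t$ with the difference in $\{m-n\}$ non-increasing and bounded below by $2-n_{t+1}$, combine with $n_{t+1}\ge n_t$: we get $m_{t+1}\le m_t$ is false in general, so this line does not close. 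We therefore return to the clean argument: the sequence $n_t$ is non-decreasing. Either it is eventually constant, equal to $N$ say for $t\ge t_0$ --- then for $t\ge t_0$ we have $k_t=0$, hence $S_t\ne\varnothing$ (by strictness), hence $m_{t+1}<m_t$, giving an infinite strictly decreasing sequence $m_{t_0}>m_{t_0+1}>\cdots$ of integers $\ge2$, a contradiction --- or $n_t$ is unbounded. In the latter case, choose any index $t$ with $n_t\ge3$. Then, tracing back, $\lambda^{(0)}\succeq\lambda^{(t)}$, so $\lambda^{(t)}=U_S\bigl((\lambda^{(0)})^k\bigr)$ with $n_t=n_0+k$; since there are only finitely many partitions $\mu$ with $n(\mu)=n_t$ (as each such $\mu$ is a finite non-increasing sequence of positive integers summing to $n_t$), and the chain is injective, the chain cannot pass through any fixed value of $n$ more than finitely often --- which is automatic --- but it can still have infinitely many distinct terms as $n_t\to\infty$. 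Hence this case is genuinely possible a priori and must be excluded by a separate argument, which is exactly the role of the descending chain condition being proved; to avoid circularity, note that in fact $n_t$ \emph{cannot} be unbounded: if $k_t>0$ then $\lambda^{(t+1)}=U_{S_t}\bigl((\lambda^{(t)})^{k_t}\bigr)$ has $\lambda^{(t)}\preceq\lambda^{(t+1)}$, i.e. $\lambda^{(t)}\preceq\lambda^{(t+1)}\preceq\lambda^{(t)}$ would force equality only if the chain were not strict --- but here the chain is descending, so $\lambda^{(t+1)}\preceq\lambda^{(t)}$, contradicting $\lambda^{(t)}\preceq\lambda^{(t+1)}$ unless $\lambda^{(t)}=\lambda^{(t+1)}$ by antisymmetry (Lemma~\ref{le is order}). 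Thus $k_t=0$ for every $t$, so $n_t=n_0$ for all $t$ and $S_t\ne\varnothing$ for all $t$, giving an infinite strictly decreasing sequence $m_0>m_1>m_2>\cdots$ of integers $\ge2$, the final contradiction.
\end{proof}
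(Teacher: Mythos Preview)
Your informal plan is sound, but the formal proof reverses the direction of the defining relation and never recovers. From $\lambda^{(t+1)}\prec\lambda^{(t)}$ the definition gives $\lambda^{(t)}=U_{S_t}\bigl((\lambda^{(t+1)})^{k_t}\bigr)$, not $\lambda^{(t+1)}=U_{S_t}\bigl((\lambda^{(t)})^{k_t}\bigr)$ as you wrote. With the correct direction one gets $n_{t+1}=n_t-k_t\le n_t$ (as your preamble correctly asserts), but your displayed equations give $n_{t+1}=n_t+k_t$, and the long meandering passage that follows is an attempt to fight this self-inflicted problem. The final ``fix'' via antisymmetry does not work: the observation that $\lambda^{(t+1)}=U_{S_t}\bigl((\lambda^{(t)})^{k_t}\bigr)$ implies $\lambda^{(t)}\preceq\lambda^{(t+1)}$ holds for \emph{every} $k_t\ge0$, not just $k_t>0$, so combined with $\lambda^{(t+1)}\preceq\lambda^{(t)}$ and antisymmetry it would force $\lambda^{(t)}=\lambda^{(t+1)}$ regardless of $k_t$. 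You cannot conclude ``thus $k_t=0$''; what you have actually shown is that your displayed equation is simply the wrong one. The subsequent sentence (``so $n_t=n_0$ for all $t$ and $S_t\ne\varnothing$, giving $m_0>m_1>\cdots$'') is therefore unsupported.

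Once the direction is fixed, your intended argument does go through (with one twist: when $n_t$ stabilises and $k_t=0$, the relation $\lambda^{(t)}=U_{S_t}(\lambda^{(t+1)})$ with $S_t\ne\varnothing$ gives $m_t<m_{t+1}$, so $m_t$ is strictly \emph{increasing}, bounded above by the constant value of $n_t$; that is the contradiction). The paper's proof is shorter still: it simply notes that $\mu\preceq\lambda$ forces $n(\mu)\le n(\lambda)$, and there are only finitely many partitions $\mu$ with $n(\mu)\le n(\lambda)$, so the principal downset below any $\lambda$ is finite and DCC is immediate.
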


\begin{proof}
Let $\lambda,\mu\in\Lambda$ and $\lambda\preceq\mu$. Put $n(\lambda)=n$ and
$n(\mu)=q$. Then $q\le n$. Evidently, the set
$$\bigcup_{\substack{q\le n\\
2\le r\le q}}\Lambda_{q,r}$$
is finite. Thus, there exists finitely many partitions $\mu$ with $\mu\preceq
\lambda$ only. This immediately implies the desirable conclusion.
\end{proof}

We define one more binary relation $\unlhd$ on the set $\Lambda$ by the
following rule. Let $\lambda,\nu\in\Lambda$, $\lambda=(\ell_1,\ell_2,\dots,
\ell_m)$, and $\nu=(n_1,n_2,\dots,n_k)$. Then $\lambda\unlhd\nu$ if and only
if $m\le k$ and $\ell_i\le n_i$ for all $i=1,2,\dots,m$. It is evident that
$\unlhd$ is a partial order on $\Lambda$. The following claim shows a
relationship between orders $\preceq$ and $\unlhd$.

\begin{lemma}
\label{2 orders}
Let $\lambda,\nu\in\Lambda$. If $\lambda\unlhd\nu$ then $\lambda\preceq\nu$.
\end{lemma}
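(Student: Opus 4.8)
The plan is to reduce the statement $\lambda\unlhd\nu$ $\Rightarrow$ $\lambda\preceq\nu$ to a comparison of $\lambda$ with a partition of the same number, obtained from $\nu$ by an appropriate sequence of unions of components, so that the only remaining work is to ``absorb'' the size difference by attaching enough parts equal to $1$. Write $\lambda=(\ell_1,\dots,\ell_m)$ and $\nu=(n_1,\dots,n_k)$ with $m\le k$ and $\ell_i\le n_i$ for $i=1,\dots,m$. The first step is to dispose of the surplus parts $n_{m+1},\dots,n_k$: since each union of components can only decrease the number of parts and increase individual components, I would union all the ``extra'' parts $n_{m+1},\dots,n_k$ one by one into the first part $n_1$. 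Formally, let $S_0$ be the set of unions that successively merges $n_{m+1},\dots,n_k$ into $n_1$; then $U_{S_0}(\nu)$ is a partition of $n(\nu)$ into exactly $m$ parts whose components, listed in non-increasing order, dominate componentwise the partition $(n_1,\dots,n_m)$ (because we only enlarged the first component and left $n_2,\dots,n_m$ untouched), hence also dominate $(\ell_1,\dots,\ell_m)$ in the $\unlhd$ sense.

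Now it suffices to prove the statement in the special case where $\lambda$ and $\nu$ have the \emph{same} number of parts $m$, i.e.\ $\lambda=(\ell_1,\dots,\ell_m)$, $\nu=(n_1,\dots,n_m)$, $\ell_i\le n_i$ for all $i$; if I can show $\lambda\preceq\nu$ in this case, then combining with $\nu'=U_{S_0}(\nu)$ from the previous paragraph and the transitivity of $\preceq$ (Lemma~\ref{le is order}) gives $\lambda\preceq\nu$ in general, provided I also check $\nu'\preceq\nu$, which holds trivially since $\nu'=U_{S_0}(\nu^0)$. So assume $m=k$. Put $d=\sum_{i=1}^m(n_i-\ell_i)=n(\nu)-n(\lambda)\ge0$, and form $\lambda^d=(\ell_1,\dots,\ell_m,\underbrace{1,\dots,1}_{d})\in\Lambda_{n(\nu),\,m+d}$. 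I claim there is a set $S$ of unions with $U_S(\lambda^d)=\nu$. The idea is: for each $i$ with $n_i>\ell_i$, repeatedly union one of the trailing $1$'s into the $i$-th component, exactly $n_i-\ell_i$ times; altogether this uses up all $d$ of the added $1$'s and turns $\ell_i$ into $\ell_i+(n_i-\ell_i)=n_i$. After all these unions the multiset of components is precisely $\{n_1,\dots,n_m\}$, so, reordered non-increasingly, we obtain $\nu$; thus $\nu=U_S(\lambda^d)$ and hence $\lambda\preceq\nu$ by the definition of $\preceq$.

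The main technical point to be careful about is that unions of components are defined on index pairs $1\le i<j\le m'$ of the \emph{current} partition written in non-increasing order, so I should phrase the construction in terms of a multiset of component values and note that each union replaces two values $a,b$ by the single value $a+b$ (the ambient ordering being irrelevant to which pairs of values may be merged, since every unordered pair of distinct positions is allowed). With that convention the two steps above are manifestly realizable: step one merges the multiset $\{n_{m+1},\dots,n_k\}$ into one chosen value $n_1$, and step two repeatedly merges a value $1$ (one of the freshly adjoined ones) into the value currently ``representing'' $\ell_i$, doing this $n_i-\ell_i$ times for each $i$. I expect the only real obstacle is bookkeeping: making sure that the $d$ adjoined $1$'s suffice and are exactly consumed (which is immediate from $d=\sum(n_i-\ell_i)$), and that after the unions no component of $\lambda^d$ other than the targeted ones is disturbed (immediate, since a union involving a $1$ leaves all other components fixed). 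No delicate inequality or case analysis beyond the trivial $d\ge0$ is needed, so the proof should be short.
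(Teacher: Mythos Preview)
Your reduction in the first paragraph reverses the direction of $\preceq$. You claim that ``$\nu'\preceq\nu$ holds trivially since $\nu'=U_{S_0}(\nu^0)$,'' but by definition $\alpha\preceq\beta$ means $\beta=U_S(\alpha^k)$ for some $S$ and $k$; hence $\nu'=U_{S_0}(\nu^0)$ yields $\nu\preceq\nu'$, not $\nu'\preceq\nu$. Since $\preceq$ is antisymmetric (Lemma~\ref{le is order}) and $\nu'\ne\nu$ whenever $k>m$, the inequality $\nu'\preceq\nu$ is in fact false in that situation, so from $\lambda\preceq\nu'$ together with $\nu\preceq\nu'$ you cannot conclude $\lambda\preceq\nu$. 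The reduction to the equal-length case therefore does not go through.

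The fix is to discard step~1 and run your second step directly for arbitrary $k\ge m$. With $d=n(\nu)-n(\lambda)$, form $\lambda^d$; then for each $i\le m$ union $n_i-\ell_i$ of the trailing $1$'s into the $i$-th component, and for each $i$ with $m<i\le k$ union $n_i$ of the remaining $1$'s into a single block. The total number of $1$'s consumed is $\sum_{i=1}^m(n_i-\ell_i)+\sum_{i=m+1}^k n_i=d$, so the bookkeeping closes exactly, the resulting multiset of components is $\{n_1,\dots,n_k\}$, and $\nu=U_S(\lambda^d)$ gives $\lambda\preceq\nu$. (For comparison, the paper argues by induction on $n(\nu)-n(\lambda)$, reducing to the case of difference $1$, where $\nu$ is either $\lambda^1$ or $U_{i,m+1}(\lambda^1)$; your direct construction, once corrected, is essentially that induction unrolled.)
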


\begin{proof}
Let $\lambda=(\ell_1,\ell_2,\dots,\ell_m)$ and $\nu=(n_1,n_2,\dots,n_k)$.
Then $m\le k$ and $\ell_i\le n_i$ for all $i=1,2,\dots,m$. Put $s=n(\nu)-n
(\lambda)$. It is evident that $s\ge0$. If $s=0$ then $\lambda=\nu$ and we
are done. Let now $s>0$. By the trivial induction, it suffices to consider
the case $s=1$. Then either $k=m+1$, $\ell_i=n_i$ for all $i=1,2,\dots,m$,
and $n_k=1$ or $k=m$, $n_i=\ell_i+1$ for some $i\in\{1,2,\dots,m\}$ and $n_j=
\ell_j$ for all $j\ne i$. It is evident that $\nu=U_\varnothing\bigl
(\lambda^1\bigr)$ in the former case, while $\nu=U_{i,m+1}\bigl(\lambda^1
\bigr)$ in the latter one. Thus, $\lambda\preceq\nu$ in any case.
\end{proof}

The second important property of the relation $\preceq$ is given by the
following

\begin{lemma}
\label{antichains}
The partially ordered set $\langle\Lambda;\preceq\rangle$ does not contain
infinite anti-chains.
\end{lemma}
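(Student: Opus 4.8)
I want to show that $\langle\Lambda;\preceq\rangle$ has no infinite antichains. The natural strategy is to reduce this to a well-quasi-order statement that is already half-packaged for us by Lemma~\ref{2 orders}: since $\lambda\unlhd\nu$ implies $\lambda\preceq\nu$, any antichain with respect to $\preceq$ is also an antichain with respect to $\unlhd$. So it suffices to prove that $\langle\Lambda;\unlhd\rangle$ has no infinite antichains, i.e.\ (together with Lemma~\ref{min condition}, which gives the descending chain condition, though actually for the antichain statement I only need the wqo-type conclusion) that $\unlhd$ is a well-quasi-order on $\Lambda$. This is attractive because $\unlhd$ is a concrete combinatorial order on finite nonincreasing integer sequences, and wqo of such sequences under the "dominance/embedding on a prefix" order is exactly the kind of thing handled by Dickson-type or Higman-type arguments.

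The key steps, in order. First, I would make the reduction above explicit: an infinite $\preceq$-antichain would be an infinite $\unlhd$-antichain, so assume for contradiction we have an infinite sequence $\lambda^{(1)},\lambda^{(2)},\dots$ of partitions, pairwise incomparable under $\unlhd$. Second, I would bound the number of parts. If infinitely many of the $\lambda^{(t)}$ have the same number of parts $m$, then we are comparing elements of the fixed set of nonincreasing $m$-tuples of positive integers under coordinatewise $\le$; this is $\langle\mathbb{N}^m;\le\rangle$ restricted, and Dickson's Lemma gives that any infinite sequence has an increasing pair — contradiction. So we may assume the number of parts $m(\lambda^{(t)})$ is unbounded, and after passing to a subsequence, strictly increasing. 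Third — the heart of the argument — I pass to a further subsequence so that the sequence is "good" in a Higman sense: I would use the standard minimal-bad-sequence argument (or directly Higman's lemma on finite words over the wqo $\langle\mathbb{N};\le\rangle$, viewing a partition $(\ell_1,\dots,\ell_m)$ as the word $\ell_1\ell_2\cdots\ell_m$) to extract indices $s<t$ with the word for $\lambda^{(s)}$ embedding into the word for $\lambda^{(t)}$: there is a strictly increasing injection $f:\{1,\dots,m(\lambda^{(s)})\}\to\{1,\dots,m(\lambda^{(t)})\}$ with $\ell_i(\lambda^{(s)})\le\ell_{f(i)}(\lambda^{(t)})$. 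Fourth, I need the bridge from a Higman embedding to an $\unlhd$-comparison, and here a small observation about nonincreasing sequences does the work: if the word $\ell_1\cdots\ell_m$ (nonincreasing) Higman-embeds into $n_1\cdots n_k$ (nonincreasing) via $f$, then in fact $\ell_i\le n_{f(i)}\le n_i$ for every $i$, because $f(i)\ge i$ and the target is nonincreasing; hence $\lambda^{(s)}\unlhd\lambda^{(t)}$, contradicting incomparability.

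**The main obstacle.** The delicate point is Step four — making sure the "prefix-dominance" definition of $\unlhd$ (compare only the first $m$ coordinates, coordinatewise) is genuinely implied by a Higman subword embedding. The monotonicity trick $f(i)\ge i \Rightarrow n_{f(i)}\le n_i$ for nonincreasing $(n_j)$ is exactly what rescues this, but it has to be stated carefully, and one should check the edge case where $m(\lambda^{(s)})=m(\lambda^{(t)})$ is handled (there $f$ is the identity and the claim is immediate, consistent with the Dickson step). A secondary concern is bookkeeping: Higman's lemma is most cleanly invoked if we don't separately split off the "bounded number of parts" case, since Higman over $\langle\mathbb{N};\le\rangle$ already subsumes Dickson; but if the paper prefers a self-contained argument avoiding a black-box citation of Higman, I would instead run the minimal-bad-sequence argument directly on $\langle\Lambda;\unlhd\rangle$, using at each stage that the first components $\ell_1(\lambda^{(t)})$ of a bad sequence have a nondecreasing-along-a-subsequence selection (possible since, were they unbounded, comparability would be even easier — actually one uses that among any infinite set of naturals some one is $\le$ another, i.e.\ $\langle\mathbb{N};\le\rangle$ is a wqo — then delete first components and recurse). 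I expect the minimal-bad-sequence route to be cleaner to present in full, and that is the form I would write up.
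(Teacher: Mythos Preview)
Your proposal is correct. Both you and the paper begin with the same reduction via Lemma~\ref{2 orders}: an infinite $\preceq$-antichain would be an infinite $\unlhd$-antichain, so it suffices to show $\langle\Lambda;\unlhd\rangle$ has no infinite antichains. From there the routes diverge.

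You invoke Higman's lemma on words over the wqo $\langle\mathbb N;\le\rangle$ and then use the pleasant observation that for nonincreasing sequences a Higman embedding $f$ automatically upgrades to prefix dominance, since $f(i)\ge i$ forces $\ell_i\le n_{f(i)}\le n_i$. This is clean, conceptual, and situates the lemma in the standard well-quasi-order framework; the cost is a black-box citation of Higman (or, as you note, a minimal-bad-sequence argument written out in full). The paper instead runs a bespoke pigeonhole argument: from a putative infinite antichain $A_0$ it picks $\lambda_1\in A_0$ of minimal length, finds by $\lambda_1\ntrianglelefteq\nu$ a coordinate $i_1$ where $\lambda_1$ strictly dominates infinitely many $\nu\in A_0$, passes to that infinite subset $A_1$, and iterates. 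This produces infinitely many pairwise distinct pairs $(i_s,j_s)$ which are then shown to live in a bounded region of $\mathbb N^2$, a contradiction. The paper's argument is entirely self-contained and elementary, avoiding any appeal to Higman; yours is shorter and more transparent once Higman is granted, and your bridging step (Higman embedding of nonincreasing words implies $\unlhd$) is a genuinely useful remark that the paper does not isolate.
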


\begin{proof}
Arguing by contradiction, suppose that $\Lambda$ contains an infinite
anti-chain $A_0$. Put $m_1=\min\,\{m(\lambda)\mid\lambda\in A_0\}$. Let us
fix a partition $\lambda_1=(\ell_1^1,\ell_2^1,\dots,\ell_{m_1}^1)\in A_0$. If
$\nu=(n_1,n_2,\dots,n_k)$ is an arbitrary partition from $A_0$ then
$\lambda_1\npreceq\nu$, whence $\lambda_1\ntrianglelefteq\nu$ by Lemma~\ref
{2 orders}. Since $m_1\le k$, this means that $\ell_i^1>n_i$ for some $i\in
\{1,2,\dots,m_1\}$. The set $A_0$ is infinite, while the index $i$ runs over
the finite set $\{1,2,\dots,m_1\}$. Hence there is an index $i_1\le m_1$ such
that $n_{i_1}<\ell_{i_1}^1$ for an infinite set of partitions $A_1\subseteq
A_0$. Put $j_1=\ell_{i_1}^1$.

Put $m_2=\min\,\{m(\lambda)\mid\lambda\in A_1\}$. Let us fix a partition
$\lambda_2=(\ell_1^2,\ell_2^2,\dots,\ell_{m_2}^2)\in A_1$. The same arguments
as in the previous paragraph show that there is a number $i_2\le m_2$ and an
infinite set $A_2\subseteq A_1$  such that $n_{i_2}<\ell_{i_2}^2$ for every
$\nu=(n_1,n_2,\dots,n_k)\in A_2$. Put $j_2=\ell_{i_2}^2$.

Continuing this process, we construct a sequence of infinite sets of
partitions $A_0\supseteq A_1\supseteq A_2\supseteq\cdots$, a sequence of
partitions $\{\lambda_s=(\ell_1^s,\ell_2^s,\dots,\ell_{m_s}^s)\mid s\in
\mathbb N\}$, and two sequences of numbers $\{i_s\mid s\in\mathbb N\}$ and
$\{j_s\mid s\in\mathbb N\}$ such that, for any $s\in\mathbb N$, the following
holds: $\lambda_s\in A_{s-1}$, $i_s\le m_s$, $j_s=\ell_{i_s}^s$, and
$n_{i_s}<j_s$ for any $\nu=(n_1,n_2,\dots,n_k)\in A_s$. The choice of the
partitions $\lambda_1,\lambda_2,\dots$ guarantees that if $p>q$ then
$$\ell_{i_q}^p<\ell_{i_q}^q=j_q\ldotp$$
In particular, if $p>q$ and $i_p=i_q$ then
$$j_p=\ell_{i_p}^p=\ell_{i_q}^p<j_q\ldotp$$
This means that all pairs of the kind $(i_s,j_s)$ are different. Furthermore,
if $i_p\ge i_q$ then $\ell_{i_p}^p\le\ell_{i_q}^p$ because all partitions
$\lambda_1,\lambda_2,\dots$ are non-increasing sequences of numbers.
Therefore, if $p>q$ and $i_p\ge i_q$ then
$$j_p=\ell_{i_p}^p\le\ell_{i_q}^p<j_q\ldotp$$
Put $i_r=\min\,\{i_s\mid s\in\mathbb N\}$, $j_t=\min\,\{j_s\mid s\in\mathbb
N\}$, and $h=\max\,\{r,t\}$. If $s>h$ then $i_s\ge i_r$ and $j_s\ge j_t$,
whence $j_s<j_r$ and $i_s<i_t$. We see that both the sequences $\{i_s\mid s
\in\mathbb N\}$ and $\{j_s\mid s\in\mathbb N\}$ are bounded. But this is
impossible because all pairs of the kind $(i_s,j_s)$ are different. The
contradiction completes the proof.
\end{proof}

\section{Proof of Proposition \ref{reduce}: necessity}
\label{nec}

Here we aims to verify that if an overcommutative variety satisfies the
condition~(vi) of Theorem~\ref{main} then it is greedy. We start with some
new notation and several auxiliary facts.

For arbitrary words $w_1,w_2$ and an identity system $\Sigma$, we write $w_1
\stackrel{\Sigma}{\longrightarrow}w_2$ if there exist $a,b\in F^1$, $s,t\in
F$, and an endomorphism $\zeta$ on $F$ such that $w_1\equiv a\zeta(s)b$, $w_2
\equiv a\zeta(t)b$, and the identity $s\approx t$ belongs to $\Sigma$. It is
a common knowledge that an identity $u\approx v$ follows from a system
$\Sigma$ if and only if there exists a sequence of words $w_0,w_1,\dots,
w_\ell$ such that
\begin{equation}
\label{deduct}
u\equiv w_0\stackrel{\Sigma}{\longrightarrow}w_1\stackrel{\Sigma}
{\longrightarrow}\cdots\stackrel{\Sigma}{\longrightarrow}w_\ell\equiv v\ldotp
\end{equation}
This sequence is called a \emph{deduction} of the identity $u\approx v$ from
$\Sigma$. Note that if $\Sigma$ consists of balanced identities then $\ell
(w_i)=\ell(u\approx v)$, $n(w_i)=n(u\approx v)$, and $\partition(w_i)=
\partition(u\approx v)$ for all $i=0,1,\dots,\ell$.

\begin{lemma}
\label{poor xi}
Let $u$ be a word and $\xi$ an endomorphism on $F$ such that $\ell(\xi(u))=
\ell(u)$. Then $\partition(u)\preceq\partition(\xi(u))$.
\end{lemma}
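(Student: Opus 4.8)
The plan is to analyse how the endomorphism $\xi$ acts on the letters occurring in $u$ and to reconstruct $\partition(\xi(u))$ from $\partition(u)$ via operations of the kind allowed in the definition of $\preceq$. First I would set $c(u)=\{x_{j_1},\dots,x_{j_m}\}$ and examine the words $\xi(x_{j_1}),\dots,\xi(x_{j_m})$. Since $\ell(\xi(u))=\ell(u)$ and every letter of $u$ occurs at least once, the hypothesis forces each $\xi(x_{j_p})$ to be a single letter (if some $\xi(x_{j_p})$ had length $0$ or length $\ge 2$, the length of $\xi(u)$ would drop or grow strictly, using that $\ell_{j_p}(u)\ge 1$). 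So $\xi$ restricted to $c(u)$ is simply a map from $\{x_{j_1},\dots,x_{j_m}\}$ into the alphabet; write $y_p$ for the letter $\xi(x_{j_p})$.

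Next I would record how $\partition(\xi(u))$ arises. For a letter $z\in c(\xi(u))$ we have $\ell_z(\xi(u))=\sum\{\,\ell_{j_p}(u)\mid y_p=z\,\}$, i.e.\ the multiplicities of $u$ get grouped according to the fibres of the map $p\mapsto y_p$, and the multiplicities inside one fibre are \emph{added}. Thus $\partition(\xi(u))$ is obtained from $\partition(u)=(\ell_{j_1}(u),\dots,\ell_{j_m}(u))$ (reordered non-increasingly) by partitioning the index set into blocks and replacing each block by the sum of its entries — which is exactly a finite sequence of unions of components, i.e.\ $\partition(\xi(u))=U_S(\partition(u))$ for the set $S$ of unions corresponding to merging each fibre into one component. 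Hence $\partition(u)\preceq\partition(\xi(u))$ with $k=0$ in the definition of $\preceq$.

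The only point needing a little care is bookkeeping of the non-increasing reordering: $\partition(u)$ and $\partition(\xi(u))$ are defined as the multisets of multiplicities written in non-increasing order, whereas the operations $U_{i,j}$ in Section~\ref{order} are defined on partitions already written non-increasingly and then re-sort the result. Since unions of components commute with reordering (merging two entries of a multiset and then sorting gives the same partition regardless of the order in which the entries were listed), this causes no trouble: one simply chooses the positions $i,j$ in each $U_{i,j}$ to point at the current entries being merged. I expect this reordering bookkeeping to be the only mildly fiddly part; the structural content — that an endomorphism which preserves length can only merge letters, and merging letters adds multiplicities — is immediate from the length hypothesis.
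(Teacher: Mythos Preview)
Your argument is correct and follows essentially the same route as the paper's own proof: from $\ell(\xi(u))=\ell(u)$ deduce that $\xi$ sends each letter of $c(u)$ to a single letter, then observe that $\partition(\xi(u))=U_S(\partition(u)^0)$ where $S$ records the fibres of this letter-to-letter map. The paper compresses this into two sentences, while you spell out the length count and the reordering bookkeeping; one very minor remark is that in $F$ (the free semigroup without empty word) an endomorphism cannot send a letter to a word of length~$0$, so that case in your parenthetical is vacuous.
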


\begin{proof}
Put $\lambda=\partition(u)$. It is clear that $\xi(x)$ is a letter for every
letter $x$. The requirement conclusion follows from the following evident
observation: $\partition(\xi(u))=U_S\bigl(\lambda^0\bigr)$ where $S$ is a
finite (may be empty) set of unions of components of $\lambda$ corresponding
to letters from $c(u)$ with the same image under $\xi$.
\end{proof}

\begin{lemma}
\label{lambda less u=v}
Let $\lambda=(\ell_1,\ell_2,\dots,\ell_m)\in\Lambda_{n,m}$. If a non-trivial
identity $u\approx v$ holds in the variety $S_\lambda$ then $\lambda\preceq
\partition(u\approx v)$.
\end{lemma}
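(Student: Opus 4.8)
The plan is to analyze a deduction of the identity $u\approx v$ from the defining system of $\mathcal S_\lambda$ and track the partition along it. Recall that $\mathcal S_\lambda=\bigwedge_{i=0}^{s(\lambda)}\mathcal W_{n+i,m+i,\lambda^i}$, so a variety lies in $\mathcal S_\lambda$ exactly when it satisfies, for each $i\in\{0,1,\dots,s(\lambda)\}$, all identities $p\approx q$ with $p,q\in W_{n+i,m+i,\lambda^i}$. Hence $u\approx v$ holds in $\mathcal S_\lambda$ if and only if it follows from the union $\Sigma$ of these identity systems. First I would fix a deduction
$$u\equiv w_0\stackrel{\Sigma}{\longrightarrow}w_1\stackrel{\Sigma}{\longrightarrow}\cdots\stackrel{\Sigma}{\longrightarrow}w_\ell\equiv v$$
with $\ell\ge1$ (possible since $u\approx v$ is non-trivial, so $u\not\equiv v$). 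All identities in $\Sigma$ are balanced, so $\partition(w_0)=\partition(w_1)=\cdots=\partition(w_\ell)$; in particular $\partition(u\approx v)=\partition(u)$. It therefore suffices to show $\lambda\preceq\partition(w_0)$, i.e. that the very first rewriting step already forces the inequality, and then conclude.

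Next I would examine a single step $w_0\equiv a\zeta(p)b\longrightarrow a\zeta(q)b\equiv w_1$ with $p\approx q$ in $\Sigma$, say $p,q\in W_{n+i,m+i,\lambda^i}$ for some $i\le s(\lambda)$. Since the identity $p\approx q$ is applied inside $w_0$ via the endomorphism $\zeta$ and $w_0\not\equiv w_1$ (we may assume the deduction has no idle steps, or pick the first genuine step), the substituted word $\zeta(p)$ is a non-empty factor of $w_0$ of length $\ell(p)=n+i$. The key point is that $\zeta$ restricted to the occurrence of $\zeta(p)$ cannot collapse letters to strictly shorter length in a way that destroys the partition information: because $\ell(\zeta(p))=\ell(p)$, the map $\zeta$ sends each letter of $p$ to a single letter, so by (the argument of) Lemma~\ref{poor xi} we get $\partition(p)\preceq\partition(\zeta(p))$; and since $\zeta(p)$ is a factor of $w_0$, adding the letters from $a$ and $b$ only increases the partition further in the order $\preceq$, giving $\partition(\zeta(p))\preceq\partition(w_0)$ via Lemma~\ref{2 orders}. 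Combining, $\partition(p)\preceq\partition(w_0)$. Finally $\partition(p)=\lambda^i$ by definition of $W_{n+i,m+i,\lambda^i}$, and $\lambda\preceq\lambda^i$ trivially (take $k=i$, $S=\varnothing$), so $\lambda\preceq\partition(p)\preceq\partition(w_0)=\partition(u\approx v)$, as required.

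The main obstacle I anticipate is the bookkeeping in the single-step analysis: one must be careful that the factor $\zeta(p)$ really has length exactly $n+i$ (so that $\zeta$ does not erase any letter of $p$, since endomorphisms of $F$ can only increase or preserve word length, and here the applied identity is balanced so the total length of $w_0$ equals that of $w_1$), and that "factor of $w_0$" correctly translates into an inequality in $\unlhd$ and hence in $\preceq$ — namely that if $w$ is a factor of $w'$ with $\ell(w)=\ell(w')$ then $w\equiv w'$, while if $\ell(w)<\ell(w')$ then $\partition(w)\unlhd\partition(w')$ because each letter's multiplicity can only grow. A minor subtlety is handling the degenerate situation where $a$ or $b$ is empty or where the added letters coincide with letters already in $\zeta(p)$; in all cases the resulting partition of $w_0$ still dominates $\partition(\zeta(p))$ in the order $\unlhd$, so Lemma~\ref{2 orders} applies and the chain of $\preceq$-inequalities goes through.
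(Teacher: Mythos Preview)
Your overall strategy---reduce to a single rewriting step $w_0\equiv a\zeta(p)b$ with $p\in W_{\lambda^i}$ and climb a chain of $\preceq$-inequalities---is exactly right, and it matches the paper's approach. However, there is a genuine gap at the heart of your single-step analysis.

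You assert that ``the substituted word $\zeta(p)$ is a non-empty factor of $w_0$ of length $\ell(p)=n+i$'', and later justify this by saying that endomorphisms cannot shorten words and that the identity is balanced. But balancedness of $p\approx q$ only gives $\ell(\zeta(p))=\ell(\zeta(q))$ (hence $\ell(w_0)=\ell(w_1)$); it does \emph{not} force $\ell(\zeta(p))=\ell(p)$. An endomorphism of $F$ may send letters to arbitrary non-empty words, so $\ell(\zeta(p))$ can be strictly larger than $\ell(p)$. For a concrete instance, take $\lambda=(2,1)$, $i=0$, $p=x_1^2x_2$, $q=x_1x_2x_1$, and $\zeta(x_1)=y_1y_2$, $\zeta(x_2)=y_3$, with $a,b$ empty: then $\zeta(p)=y_1y_2y_1y_2y_3$ has length $5$, not $3$. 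In this situation Lemma~\ref{poor xi} does not apply to $\zeta$ and $p$, so your inference $\partition(p)\preceq\partition(\zeta(p))$ is unsupported. (Note also that when $\zeta$ identifies letters, one need not have $\partition(p)\unlhd\partition(\zeta(p))$, so Lemma~\ref{2 orders} cannot save this step either.)

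This is precisely the case the paper isolates and treats separately. When $\ell(a\zeta(p)b)>\ell(p)$, the paper sets $k=\ell(a\zeta(p)b)-\ell(p)$, writes $\zeta(x_j)\equiv y_ju_j$, and constructs an auxiliary word $d$ with $\partition(d)=\lambda^{i+k}$ together with a letter-to-letter endomorphism $\xi$ satisfying $\ell(\xi(d))=\ell(d)$ and $\partition(\xi(d))=\partition(a\zeta(p)b)$. Now Lemma~\ref{poor xi} legitimately applies to $\xi$ and $d$, giving $\lambda\preceq\lambda^{i+k}=\partition(d)\preceq\partition(\xi(d))=\partition(w_0)$. Your factor argument via $\unlhd$ for the passage from $\zeta(p)$ to $a\zeta(p)b$ is fine; what is missing is a correct bridge from $\partition(p)$ to $\partition(\zeta(p))$ (or directly to $\partition(w_0)$) in the case where $\zeta$ is not letter-to-letter.
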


\begin{proof}
We put
$$\Sigma=\{f\approx g\mid\ \text{there is}\ i\in\{0,1,\dots,s(\lambda)\}\
\text{with}\ f,g\in W_{n+i,m+i,\lambda^i}\}\ldotp$$
Thus, $S_\lambda=\var\Sigma$. Let~\eqref{deduct} be a deduction of the
identity $u\approx v$ from $\Sigma$. Note that $\ell\ge1$ because the
identity $u\approx v$ is non-trivial. We have $w_0\equiv a\zeta(s)b$ and $w_1
\equiv a\zeta(t)b$ for some homomorphism $\zeta$ on $F$, some $a,b\in F^1$,
and some $s,t\in W_{n+i,m+i,\lambda^i}$ where $i\in\{0,1,\dots,s(\lambda)\}$.

If $\ell(a\zeta(s)b)=\ell(s)$ then the words $a$ and $b$ are empty and
$\ell(\zeta(s))=\ell(s)$. Here we may apply Lemma~\ref{poor xi} and conclude
that
\begin{align*}
\lambda\preceq U_\varnothing\bigl(\lambda^i\bigr)&=\lambda^i=\partition(s)
\preceq\partition(\zeta(s))=\\
&=\partition(a\zeta(s)b)=\partition(w_0)=\partition(u\approx v)\ldotp
\end{align*}
Therefore, $\lambda\preceq\partition(u\approx v)$, and we are done.

Suppose now that $\ell(a\zeta(s)b)>\ell(s)$. For each $j=1,2,\dots,m+i$, we
denote by $y_j$ the first letter of the word $\zeta(x_j)$. Thus, $\zeta(x_j)
\equiv y_ju_j$ for some $u_j\in F^1$. We have
\begin{align*}
\partition(a\zeta(s)b)&=\partition(a(\zeta(x_1))^{\ell_1}\cdots(\zeta
(x_m))^{\ell_m}\zeta(x_{m+1})\cdots\zeta(x_{m+i})b)=\\
&=\partition(a(y_1u_1)^{\ell_1}\cdots(y_mu_m)^{\ell_m}y_{m+1}u_{m+1}\cdots
y_{m+i}u_{m+i}b)=\\
&=\partition(y_1^{\ell_1}\cdots y_m^{\ell_m}y_{m+1}\cdots y_{m+i}u_1^{\ell_1}
\cdots u_m^{\ell_m}u_{m+1}\cdots u_{m+i}ab)\ldotp
\end{align*}
Let $k=\ell(a\zeta(s)b)-\ell(s)$. Put
\begin{align*}
&c_1\equiv y_1^{\ell_1}\cdots y_m^{\ell_m}y_{m+1}\cdots y_{m+i};\\
&c_2\equiv u_1^{\ell_1}u_2^{\ell_2}\cdots u_m^{\ell_m}u_{m+1}\cdots u_{m+i}a
b;\\
&c\equiv c_1c_2\equiv y_1^{\ell_1}\cdots y_m^{\ell_m}y_{m+1}\cdots y_{m+i}
u_1^{\ell_1}\cdots u_m^{\ell_m}u_{m+1}\cdots u_{m+i}ab;\\
&d\equiv x_1^{\ell_1}\cdots x_m^{\ell_m}x_{m+1}\cdots x_{m+i+k}\ldotp
\end{align*}
Since $\partition(c)=\partition(a\zeta(s)b)$ and $\partition(c_1)=\partition
(s)$, we have $\ell(c)=\ell(a\zeta(s)b)$ and $\ell(c_1)=\ell(s)$. Besides
that, $\ell(c)=\ell(c_1)+\ell(c_2)$. Therefore,
$$\ell(c_2)=\ell(c)-\ell(c_1)=\ell(a\zeta(s)b)-\ell(s)=k\ldotp$$
It is convenient for us to rewrite the word $c_2$ in the form $c_2\equiv z_1
z_2\dots z_k$ where $z_1,z_2,\dots,z_k$ are (not necessarily different)
letters. Let $\xi$ be an endomorphism on $F$ such that
$$\xi(x_j)\equiv
\begin{cases}
y_j&\text{whenever}\ 1\le j\le m+i,\\
z_{j-m-i}&\text{whenever}\ m+i+1\le j\le m+i+k\ldotp
\end{cases}$$
Then $c\equiv\xi(d)$. It is clear that
$$\ell(d)=\ell(c_1)+k=\ell(s)+\ell(a\zeta(s)b)-\ell(s)=\ell(a\zeta(s)b)=\ell
(c)=\ell(\xi(d))\ldotp$$
Now we may apply Lemma~\ref{poor xi} and conclude that
\begin{align*}
\lambda\preceq U_\varnothing\bigl(\lambda^k\bigr)&=\lambda^k=\partition(d)
\preceq\partition(\xi(d))=\partition(c)=\\
&=\partition(a\zeta(s)b)=\partition(w_0)=\partition(u\approx v)
\ldotp
\end{align*}
Therefore, $\lambda\preceq\partition(u\approx v)$, and we are done.
\end{proof}

\begin{lemma}
\label{W_lambda in W_mu}
Let $\lambda,\mu\in\Lambda$. If $\mu=U_S(\lambda)$ for some finite set $S$
of unions of components then $\mathcal W_\lambda\subseteq\mathcal W_\mu$.
\end{lemma}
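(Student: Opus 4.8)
\emph{Plan of proof.} By definition $U_S(\lambda)$ is obtained from $\lambda$ by performing the unions in $S$ one at a time, each single union decreasing the number of parts by one, and set inclusion is transitive; so by induction on $|S|$ it suffices to treat the case of a single union, i.e.\ $\mu=U_{i,j}(\lambda)$ where $\lambda=(\ell_1,\dots,\ell_m)\in\Lambda_{n,m}$ with $m\ge3$ and $1\le i<j\le m$. Since $\mathcal W_\mu=\var\,\{u\approx v\mid u,v\in W_\mu\}$ is the largest variety satisfying these identities, to prove $\mathcal W_\lambda\subseteq\mathcal W_\mu$ it is enough to show that every identity $u\approx v$ with $u,v\in W_\mu$ holds in $\mathcal W_\lambda$.

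So fix $u,v\in W_\mu$. Because $u,v\in W_\mu$, we have $\ell_k(u)=\ell_k(v)$ for all $k$, and these common values are exactly the components of $\mu$; in particular there is an index $p$ with $\ell_p(u)=\ell_p(v)=\ell_i+\ell_j$, the ``merged'' component. Form $u_1$ from $u$ by choosing arbitrarily $\ell_j$ of the $\ell_i+\ell_j$ occurrences of $x_p$ and relabelling them by a fresh letter $x_m\notin c(u)$; form $v_1$ from $v$ in the same way (splitting off $\ell_j$ occurrences of $x_p$ into $x_m$). An exponent count gives $\partition(u_1)=\partition(v_1)=\lambda$: the multiset of exponents of $u_1$ is that of $u$ with one copy of $\ell_i+\ell_j$ deleted and $\ell_i,\ell_j$ adjoined, which equals $\{\ell_1,\dots,\ell_m\}$, and similarly for $v_1$. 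Crucially, $u_1$ and $v_1$ have \emph{the same} exponent function $k\mapsto\ell_k(\cdot)$, because $u$ and $v$ did and because we split the same letter $x_p$ into the same target exponents $\ell_i$ and $\ell_j$. Finally, the endomorphism $\eta$ of $F$ with $\eta(x_m)\equiv x_p$ and $\eta(x_k)\equiv x_k$ for $k\ne m$ satisfies $\eta(u_1)\equiv u$ and $\eta(v_1)\equiv v$.

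To finish: since $u_1$ and $v_1$ carry the same exponent function, there is a single permutation $\pi$ of $\{1,\dots,m\}$ such that the renaming endomorphism $\sigma$ sending each $x_k$ to $x_{\pi^{-1}(k)}$ maps both $u_1$ and $v_1$ into $W_\lambda$ (the non-increasing rearrangement of exponents is computed from the exponent function alone, hence is the same for both). Then $\sigma(u_1)\approx\sigma(v_1)$ is one of the defining identities of $\mathcal W_\lambda$ and so holds in $\mathcal W_\lambda$; applying the inverse renaming endomorphism shows $u_1\approx v_1$ holds in $\mathcal W_\lambda$; applying $\eta$ then shows $u\approx v$ holds in $\mathcal W_\lambda$. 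As $u,v\in W_\mu$ were arbitrary, $\mathcal W_\lambda$ satisfies every defining identity of $\mathcal W_\mu$, i.e.\ $\mathcal W_\lambda\subseteq\mathcal W_\mu$, which completes the single‑union case and hence the proof.

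The exponent bookkeeping and the standard fact that a variety closed under $s\approx t$ is closed under $\zeta(s)\approx\zeta(t)$ for every endomorphism $\zeta$ of $F$ are routine; the one point requiring care is arranging the two splittings so that $u_1$ and $v_1$ have a common exponent function, since this is exactly what allows a single renaming $\sigma$ to carry both words simultaneously into the transversal $W_\lambda$.
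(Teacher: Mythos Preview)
Your proof is correct and follows essentially the same approach as the paper: reduce to a single union $\mu=U_{i,j}(\lambda)$ by induction, split the letter carrying the merged exponent $\ell_i+\ell_j$ back into two letters with exponents $\ell_i$ and $\ell_j$, observe that the resulting identity has partition $\lambda$ and hence holds in $\mathcal W_\lambda$, and then collapse the two letters via an endomorphism to recover $u\approx v$. The only cosmetic differences are that the paper introduces \emph{two} fresh letters $x_p,x_q$ (erasing the original $x_k$ entirely) while you introduce just one fresh letter $x_m$ and keep $x_p$ with reduced exponent, and that you make explicit the renaming permutation $\sigma$ needed to land in the transversal $W_\lambda$, a step the paper leaves implicit by invoking the earlier remark that every balanced identity with partition $\lambda$ is equivalent to one with both sides in $W_\lambda$.
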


\begin{proof}
Let $\lambda=(\ell_1,\ell_2,\dots,\ell_m)\in\Lambda_{n,m}$. By the trivial
induction, it suffices to consider the case when $\mu=U_{i,j}(\lambda)$ for
some $i$ and $j$. We have to verify that if $u,v\in W_{n,m-1,\mu}$ then the
identity $u\approx v$ holds in $\mathcal W_\lambda$. Since $\partition(u
\approx v)=U_{i,j}(\lambda)$, there is a letter $x_k$ with $\ell_k(u)=\ell_k
(v)=\ell_i+\ell_j$. Let $x_p$ and $x_q$ be some letters such that $x_p,x_q
\notin c(u)$. One can change the first $\ell_i$ occurences of $x_k$ in $u$
and in $v$ by $x_p$, while the last $\ell_j$ occurences of $x_k$ in $u$ and
in $v$ by $x_q$. We obtain some identity $s\approx t$ with $\partition(s
\approx t)=\lambda$. Hence the variety $\mathcal W_{n,m,\lambda}$ satisfies
$s\approx t$. If we substitute $x_k$ for $x_p$ and $x_q$ in $s\approx t$ then
we return to the identity $u\approx v$. Therefore, $u\approx v$ follows from
$s\approx t$, whence $u\approx v$ holds in $\mathcal W_{n,m,\lambda}$.
\end{proof}

Recall that a letter $x_i$ is called \emph{simple in a word} $u$ if $\ell_i
(u)=1$.

\begin{lemma}
\label{s(lambda)=0}
If $\lambda\in\Lambda_{n,m}$ and $s(\lambda)=0$ then $\mathcal
W_{n,m,\lambda}\subseteq\mathcal W_{n+k,m+k,\lambda^k}$ for any positive
integer $k$.
\end{lemma}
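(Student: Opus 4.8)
The plan is to prove the inclusion $\mathcal W_{n,m,\lambda}\subseteq\mathcal W_{n+k,m+k,\lambda^k}$ by induction on $k$, the case $k=1$ carrying all the weight. For the induction step note that $s(\lambda)=0$ forces $q(\lambda)\ge1$ (if all parts of $\lambda$ exceeded $1$ then $s(\lambda)=n-\delta\ge n-1\ge1$), so $\ell_m=1$, whence $q(\lambda^1)=q(\lambda)+1$, $r(\lambda^1)=r(\lambda)$, and $\lambda^1\ne(2,1)$; therefore $s(\lambda^1)=\max\{r(\lambda)-q(\lambda)-2,0\}=0$. Since $(\lambda^1)^{k-1}=\lambda^k$, the case $k=1$ for $\lambda$ followed by the inductive hypothesis for $\lambda^1$ yields $\mathcal W_{n,m,\lambda}\subseteq\mathcal W_{n+1,m+1,\lambda^1}\subseteq\mathcal W_{n+k,m+k,\lambda^k}$.

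For $k=1$ put $\Sigma=\{\,s\approx t\mid s,t\in W_{n,m,\lambda}\,\}$, so that $\mathcal W_{n,m,\lambda}=\var\Sigma$, and put $\bar w\equiv x_1^{\ell_1}x_2^{\ell_2}\cdots x_m^{\ell_m}x_{m+1}$; clearly $\bar w\in W_{n+1,m+1,\lambda^1}$. It suffices to show that $w\approx\bar w$ follows from $\Sigma$ for every $w\in W_{n+1,m+1,\lambda^1}$, for then any two words of $W_{n+1,m+1,\lambda^1}$ are joined by a deduction over $\Sigma$ through $\bar w$. I would work with two elementary rewritings of a word $p$ of partition $\lambda^1$, each effected by one step $p\stackrel{\Sigma}{\longrightarrow}p'$: (M1) if the first or the last letter of $p$ is simple in $p$, then the complementary subword of $p$ — which is of partition $\lambda$ — may be replaced by an arbitrary word of partition $\lambda$ on the same set of letters; (M2) if two simple letters of $p$ occur consecutively, then this pair, being the image of a simple letter of a suitable word from $W_{n,m,\lambda}$ under an appropriate endomorphism, may be treated as a single letter and the resulting word of partition $\lambda$ may be arbitrarily rearranged.

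Now one uses that $s(\lambda)=0$ is equivalent to $q(\lambda)\ge2$ and $r(\lambda)\le q(\lambda)+1$. Hence $w$ has at least three simple letters, and the number $r(\lambda)$ of occurrences in $w$ of non-simple letters is at most the number $q(\lambda)+1$ of simple letters of $w$. The second fact gives a ``no dead end'' estimate: if neither end of $p$ is a simple letter and no two simple letters of $p$ are adjacent, then the non-simple occurrences would have to fill one slot before the first simple letter, one after the last, and one between each of the $q(\lambda)$ consecutive pairs of simple letters, i.e. $q(\lambda)+2>r(\lambda)$ slots — impossible; so (M1) or (M2) always applies. Using moreover that there are at least three simple letters, $p$ can always be brought to a word whose first letter is a simple letter distinct from $x_{m+1}$ (if the first letter is $x_{m+1}$, use (M1) to expose a second simple letter right after it and then (M2) to move a third one to the front; if the first letter is non-simple, either (M2) brings an adjacent pair of simple letters to the front, or the estimate above forces the last letter to be simple and (M1) does the rest). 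Once $w$ has the form $x_ap'$ with $x_a$ simple and $x_a\ne x_{m+1}$, apply (M1) to rewrite $p'$ as $x_1^{\ell_1}\cdots x_{m-q}^{\ell_{m-q}}$ followed by the $q$ simple letters other than $x_a$ in increasing order (the last of which is $x_{m+1}$), and then apply (M1) with respect to this final letter $x_{m+1}$ to rearrange what precedes it into $x_1^{\ell_1}\cdots x_{m-q}^{\ell_{m-q}}x_{m-q+1}\cdots x_m$; the result is $\bar w$.

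The main obstacle is the verification of this last paragraph — that (M1), (M2) are genuinely realized by single $\Sigma$-steps and that the organized sequence of them really reaches $\bar w$. The hypothesis enters decisively in two ways: the inequality $r(\lambda)\le q(\lambda)+1$ is what excludes configurations admitting no move, and the inequality $q(\lambda)\ge2$ — i.e. the presence of at least three simple letters, which fails for $\lambda=(2,1)$ and is ruled out there precisely by the term $\delta$ — is what makes a transposition of two simple letters achievable from (M1), (M2); without it, as for $(2,1)$, the relative order of the two simple letters is preserved by both moves and the inclusion $\mathcal W_{n,m,\lambda}\subseteq\mathcal W_{n+1,m+1,\lambda^1}$ actually fails. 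The remaining bookkeeping is routine but a little lengthy.
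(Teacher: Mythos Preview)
Your proposal is correct and follows essentially the same route as the paper: both reduce to $k=1$ by noting $s(\lambda^1)=0$, both extract from $s(\lambda)=0$ the two key facts $q(\lambda)\ge 2$ and $r(\lambda)\le q(\lambda)+1$, and both use the resulting pigeonhole argument (your ``no dead end'' estimate, the paper's trichotomy into Cases~1--3) together with the two moves you call (M1) and (M2) to deduce every word of $W_{\lambda^1}$ to the canonical word $\bar w$. The only differences are organizational---you aim for ``first letter simple and $\ne x_{m+1}$'' where the paper aims for ``last letter simple'', and you package the moves abstractly while the paper writes out the specific identities---but the underlying argument is the same.
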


\begin{proof}
The definition of the number $s(\lambda)$ immediately implies that if $s
(\lambda)=0$ then $s\bigl(\lambda^k\bigr)=0$ as well for any $k>0$. This
observation implies that, by the trivial induction, it suffices to consider
the case $k=1$.

First of all, we note that $\lambda\ne(2,1)$ because $s(\lambda)=1$
otherwise. For brevity, put $q=q(\lambda)$, $r=r(\lambda)$, $s=s(\lambda)$,
and $t=m-q$. Since $\lambda\ne(2,1)$, we have $\delta=1$ and therefore, $s=
\max\,\{r-q-1,0\}$. The equality $s=0$ implies now that $r-q-1\le0$, that is
\begin{equation}
\label{r<q+2}
r\le q+1\ldotp
\end{equation}
Suppose that $q\le1$. Then $r\le2$. Let $\lambda=(\ell_1.\ell_2,\dots,
\ell_m)$. The definition of the number $r(\lambda)$ and the inequality $r\le
2$ imply that either $\ell_1=\ell_2=\cdots=\ell_m=1$ or $\ell_1=2$ and
$\ell_2=\cdots=\ell_m=1$. Since $\lambda\ne(2,1)$, this implies that $q\ge2$.
We have a contradiction with the inequality $q\le1$. Therefore, $q\ge2$. This
implies that every word from the transversal $W_{n+1,m+1,\lambda^1}$ has at
least three simple letters.

We need to verify that any identity of the kind $u\approx v$ with $u,v\in
W_{n+1,m+1,\lambda^1}$ holds in $W_{n,m,\lambda}$. It suffices to check that
if $u\in W_{n+1,m+1,\lambda^1}$ then the variety $W_{n,m,\lambda}$ satisfies
the identity
\begin{equation}
\label{u=canon}
u\approx x_1^{\ell_1}\cdots x_t^{\ell_t}x_{t+1}\cdots x_{m+1}\ldotp
\end{equation}

At the rest part of the proof of this lemma, the words `a simple letter' mean
`a simple in $u$ letter'. One can note that one of the following three claims
hold:
\begin{itemize}
\item[1)]the word $u$ ends with a simple letter;
\item[2)]the word $u$ starts with a simple letter;
\item[3)]the word $u$ contains a subword of the kind $x_ix_j$ where $x_i$ and
$x_j$ are simple letters.
\end{itemize}
Indeed, if all these three claims fail then
$$u\equiv w_1y_1w_2y_2\cdots w_{q+1}y_{q+1}w_{q+2}$$
where $y_1,y_2,\dots,y_{q+1}$ are simple letters, while $w_1,w_2,\dots,
w_{q+2}$ are non-empty words such that the word $w\equiv w_1w_2\cdots
w_{q+2}$ does not contain simple letters. Then $r=\ell(w)=\sum
\limits_{i=1}^{q+2}\ell(w_i)\ge q+2$, contradicting the inequality~\eqref
{r<q+2}.

Now we consider three cases corresponding to the claims 1)--3).

\emph{Case} 1: $u\equiv wx_i$ for some word $w$ and some simple letter $x_i$.
The identity
\begin{equation}
\label{w=non-canon}
w\approx x_1^{\ell_1}\cdots x_t^{\ell_t}x_{t+1}\cdots x_{i-1}x_{i+1}\cdots
x_{m+1}
\end{equation}
has the partition $\lambda$, whence it holds in $\mathcal W_{n,m,\lambda}$.
Multiplying~\eqref{w=non-canon} by $x_i$ from the right, we have the identity
\begin{equation}
\label{u=last x_i}
u\approx x_1^{\ell_1}\cdots x_t^{\ell_t}x_{t+1}\cdots x_{i-1}x_{i+1}\cdots
x_{m+1}x_i
\end{equation}
that also holds in $\mathcal W_{n,m,\lambda}$. If $i=m+1$ then~\eqref
{u=last x_i} coincides with~\eqref{u=canon} and we are done. Let now $i\le
m$. Put
$$j=
\begin{cases}
m-1&\text{whenever}\ i=m,\\
m&\text{otherwise}\ldotp
\end{cases}$$
Since $u$ contains at least three simple letters, the letter $x_j$ is simple.
The identity~\eqref{u=last x_i} has the form
\begin{equation}
\label{u=aj,m+1,i}
u\approx ax_jx_{m+1}x_i
\end{equation}
for some $a\in F^1$. Let $x_p$ be a letter with $x_p\notin c(u)$. The
identity
\begin{equation}
\label{ap,i=ai,p}
ax_px_i\approx ax_ix_p
\end{equation}
has the partition $\lambda$, whence it holds in $\mathcal W_{n,m,\lambda}$.
Substituting $x_jx_{m+1}$ for $x_p$ in~\eqref{ap,i=ai,p}, we obtain the
identity
\begin{equation}
\label{aj,m+1,i=ai,j,m+1}
ax_jx_{m+1}x_i\approx ax_ix_jx_{m+1}
\end{equation}
that holds in $\mathcal W_{n,m,\lambda}$. The identity
\begin{equation}
\label{ai,j=short canon}
ax_ix_j\approx x_1^{\ell_1}\cdots x_t^{\ell_t}x_{t+1}\cdots x_m
\end{equation}
has the partition $\lambda$, whence it holds in $\mathcal W_{n,m,\lambda}$
too. Multiplying~\eqref{ai,j=short canon} on $x_{m+1}$ from the right, we
obtain the identity
\begin{equation}
\label{ai,j,m+1=canon}
ax_ix_jx_{m+1}\approx x_1^{\ell_1}\cdots x_t^{\ell_t}x_{t+1}\cdots x_{m+1}
\end{equation}
that holds in $\mathcal W_{n,m,\lambda}$ as well. Combining the
identities~\eqref{u=aj,m+1,i},~\eqref{aj,m+1,i=ai,j,m+1}, and~\eqref
{ai,j,m+1=canon}, we obtain the identity~\eqref{u=canon}.

\emph{Case} 2: $u\equiv x_iw$ for some simple letter $x_i$ and some word $w$.
The word $u$ contains at least three simple letters. Therefore, there is a
simple letter $x_j\in c(w)$. Thus, $w\equiv ax_jb$ for some $a,b\in F^1$. The
identity
\begin{equation}
\label{ajb=abj}
ax_jb\approx abx_j
\end{equation}
has the partition $\lambda$, whence it holds in $\mathcal W_{n,m,\lambda}$.
Multiplying~\eqref{ajb=abj} on $x_i$ from the left, we obtain the identity
$u\approx x_iabx_j$ that also holds in $\mathcal W_{n,m,\lambda}$. We come to
the situation considered in Case~1.

\emph{Case} 3: $u\equiv ax_ix_jb$ for some $a,b\in F^1$ and some simple
letters $x_i$ and $x_j$. Let $x_p$ be a letter with $x_p\notin c(u)$. The
identity
\begin{equation}
\label{apb=abp}
ax_pb\approx abx_p
\end{equation}
has the partition $\lambda$, whence it holds in $\mathcal W_{n,m,\lambda}$.
Substituting $x_ix_j$ for $x_p$ in~\eqref{apb=abp}, we obtain the identity
$u\approx abx_ix_j$ that holds in $\mathcal W_{n,m,\lambda}$. We come to the
situation considered in Case~1 again.
\end{proof}

\begin{corollary}
\label{S_lambda in W_lambda^k}
If $\lambda\in\Lambda$ then $\mathcal{S_\lambda\subseteq W}_{\lambda^k}$ for
any $k\ge0$.
\end{corollary}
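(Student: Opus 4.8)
The plan is to prove the containment $\mathcal{S}_\lambda \subseteq \mathcal{W}_{\lambda^k}$ for every $k \ge 0$ by splitting according to whether $k$ lies in the range $\{0, 1, \dots, s(\lambda)\}$ covered by the defining meet of $\mathcal{S}_\lambda$, or is strictly larger. Recall $\mathcal{S}_\lambda = \bigwedge_{i=0}^{s(\lambda)} \mathcal{W}_{n+i,m+i,\lambda^i}$, so the case $0 \le k \le s(\lambda)$ is trivial: $\mathcal{S}_\lambda$ is contained in each meetand, in particular in $\mathcal{W}_{\lambda^k}$.

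The substantive case is $k > s(\lambda)$. Here I would single out the top meetand $\mathcal{W}_{n+s,m+s,\lambda^s}$ where $s = s(\lambda)$, and show directly that $\mathcal{W}_{\lambda^s} \subseteq \mathcal{W}_{\lambda^k}$; combined with $\mathcal{S}_\lambda \subseteq \mathcal{W}_{\lambda^s}$ this gives the claim. To get $\mathcal{W}_{\lambda^s} \subseteq \mathcal{W}_{\lambda^k}$, write $\mu = \lambda^s$ and observe $\lambda^k = \mu^{k-s}$, so it suffices to prove $\mathcal{W}_\mu \subseteq \mathcal{W}_{\mu^j}$ for all $j \ge 0$ by induction on $j$, the case $j=0$ being vacuous. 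For the inductive step it is enough to handle $j=1$, i.e.\ $\mathcal{W}_\mu \subseteq \mathcal{W}_{\mu^1}$. Now the key point is that $s(\lambda^s) = 0$: this follows from the definition of $s$, since passing from $\lambda$ to $\lambda^1$ increases $q$ by $1$ and leaves $r$ unchanged, so $r - q - \delta$ strictly decreases at each step (note $\delta$ has settled to $1$ once we are past any partition of the form $(2,1)$, which happens after at most one step), and after $s(\lambda)$ steps the quantity $\max\{r - q - \delta, 0\}$ has reached $0$ and stays there. Hence we may apply Lemma~\ref{s(lambda)=0} to $\mu = \lambda^s$ (which has $s(\mu) = 0$) and conclude $\mathcal{W}_{n+s,m+s,\mu} \subseteq \mathcal{W}_{n+s+1,m+s+1,\mu^1}$, which is exactly $\mathcal{W}_\mu \subseteq \mathcal{W}_{\mu^1}$.

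The main obstacle is the bookkeeping around $s(\lambda^i)$ and the role of $\delta$: one must check that $\delta$ does not cause $s$ to behave non-monotonically at the very first step. The cleanest way around this is to note that $\delta = 0$ only for the single partition $(2,1) \in \Lambda_{3,2}$, for which $s((2,1)) = \max\{2 - 0 - 0, 0\} = 2$; and $(2,1)^1 = (2,1,1)$ has $q = 2$, $r = 2$, $\delta = 1$, so $s((2,1)^1) = \max\{2 - 2 - 1, 0\} = 0$, consistent with the general pattern $s(\lambda^i) = \max\{s(\lambda) - i, 0\}$ which I would record as an explicit sub-claim and verify by a one-line induction. Once that sub-claim is in hand, everything else is immediate from Lemma~\ref{s(lambda)=0} and the transitivity of $\subseteq$.
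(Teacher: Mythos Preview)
Your approach is essentially the same as the paper's: split into $k\le s(\lambda)$ (trivial from the definition of $\mathcal S_\lambda$) and $k>s(\lambda)$, observe that $s\bigl(\lambda^{s(\lambda)}\bigr)=0$, and then invoke Lemma~\ref{s(lambda)=0} to pass from $\mathcal W_{\lambda^{s(\lambda)}}$ to $\mathcal W_{\lambda^k}$. The paper simply asserts ``it is easy to see that $s\bigl(\lambda^{s(\lambda)}\bigr)=0$'' where you spell out the bookkeeping, so your write-up is if anything more detailed.

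One small slip to fix: for $\lambda=(2,1)$ you have $q(\lambda)=1$ (there is one component equal to~$1$), not $q(\lambda)=0$, so $s((2,1))=\max\{2-1-0,0\}=1$, not $2$. With the correct value your sub-claim $s(\lambda^i)=\max\{s(\lambda)-i,0\}$ is genuinely consistent with your (correct) computation $s((2,1,1))=0$; as written, the two numbers you quote contradict each other. This does not affect the structure of the argument.
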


\begin{proof}
If $k\le s(\lambda)$ then the desired inclusion holds by the definition of
the variety $\mathcal S_\lambda$. Let now $k>s(\lambda)$. It is easy to see
that $s\bigl(\lambda^{s(\lambda)}\bigr)=0$. Now we may apply Lemma~\ref
{s(lambda)=0} and conclude that $\mathcal{S_\lambda\subseteq
W}_{\lambda^{s(\lambda)}}\subseteq\mathcal W_{\lambda^k}$.
\end{proof}

\begin{proposition}
\label{S_lambda is greedy}
If $\lambda\in\Lambda$ then the variety $\mathcal S_\lambda$ is greedy.
\end{proposition}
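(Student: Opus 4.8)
The plan is to show directly that $\mathcal S_\lambda$ collapses every transversal it reduces. So suppose $\mathcal S_\lambda$ reduces a transversal $W_{p,q,\mu}$; that is, $\mathcal S_\lambda$ satisfies some non-trivial identity $u\approx v$ with $u,v\in W_{p,q,\mu}$, so that $\mu=\partition(u\approx v)$. By Lemma~\ref{lambda less u=v} we immediately get $\lambda\preceq\mu$, which by the definition of $\preceq$ means $\mu=U_S\bigl(\lambda^k\bigr)$ for some finite set $S$ of unions of components and some $k\ge0$. The goal is then to prove that $\mathcal S_\lambda$ satisfies \emph{all} identities $s\approx t$ with $s,t\in W_{p,q,\mu}$, i.e. $\mathcal S_\lambda\subseteq\mathcal W_\mu=\mathcal W_{p,q,\mu}$.

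To do this I would assemble the inclusions already available in the excerpt. First, by Corollary~\ref{S_lambda in W_lambda^k}, $\mathcal S_\lambda\subseteq\mathcal W_{\lambda^k}$ for every $k\ge0$. Second, $\lambda^k$ is a partition of $p$ into $q'$ parts for the appropriate $q'$, and $\mu=U_S\bigl(\lambda^k\bigr)$ is obtained from $\lambda^k$ purely by unions of components (no further padding, since $n(\lambda^k)=n(\mu)=p$ and the union operation preserves $n$). Hence Lemma~\ref{W_lambda in W_mu}, applied with $\lambda^k$ in place of $\lambda$, gives $\mathcal W_{\lambda^k}\subseteq\mathcal W_\mu$. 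Chaining these two inclusions yields $\mathcal S_\lambda\subseteq\mathcal W_{\lambda^k}\subseteq\mathcal W_\mu=\mathcal W_{p,q,\mu}$, which says precisely that $\mathcal S_\lambda$ satisfies every identity with both sides in $W_{p,q,\mu}$, i.e. it collapses the transversal $W_{p,q,\mu}$. Since $W_{p,q,\mu}$ was an arbitrary transversal reduced by $\mathcal S_\lambda$, the variety $\mathcal S_\lambda$ is greedy.

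The main obstacle is a small bookkeeping point rather than a deep one: one must make sure that the conclusion $\lambda\preceq\mu$ coming from Lemma~\ref{lambda less u=v} really decomposes as ``first pad $\lambda$ to $\lambda^k$, then apply unions of components'' with the \emph{same} $k$ that matches $n(\lambda^k)=n(\mu)$, so that Corollary~\ref{S_lambda in W_lambda^k} and Lemma~\ref{W_lambda in W_mu} can be composed with no gap. This is exactly the content of the definition $\lambda\preceq\mu\iff\mu=U_S(\lambda^k)$ together with the observation that unions of components preserve $n$, so once $\lambda\preceq\mu$ is known the value of $k$ is forced to be $k=n(\lambda)-\text{(number of non-$1$'s accounted for)}$; concretely $k = n(\mu)-n(\lambda)+(\text{components of }\mu\text{ absorbed})$ is not needed — one simply takes the $k$ witnessing $\lambda\preceq\mu$. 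I would phrase the write-up so as to invoke the definition of $\preceq$ explicitly and then cite Corollary~\ref{S_lambda in W_lambda^k} and Lemma~\ref{W_lambda in W_mu} in turn, keeping the argument to a few lines.
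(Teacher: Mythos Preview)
Your proof is correct and follows the paper's argument essentially line for line: reduce $W_\mu$, apply Lemma~\ref{lambda less u=v} to get $\mu=U_S(\lambda^k)$, then chain Corollary~\ref{S_lambda in W_lambda^k} and Lemma~\ref{W_lambda in W_mu} to obtain $\mathcal S_\lambda\subseteq\mathcal W_{\lambda^k}\subseteq\mathcal W_\mu$. The ``bookkeeping'' paragraph is unnecessary, since the witnessing $k$ comes directly with the definition of $\preceq$ and no further justification is needed.
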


\begin{proof}
Suppose that $\mu\in\Lambda$ and the variety $\mathcal S_\lambda$ reduces the
transversal $W_\mu$, that is $\mathcal S_\lambda$ satisfies some non-trivial
identity $u\approx v$ with $u,v\in W_\mu$. Lemma~\ref{lambda less u=v}
implies that
$\lambda\preceq\mu$, that is $\mu=U_S\bigl(\lambda^k\bigr)$ for some $S$ and
$k$. Applying Corollary~\ref{S_lambda in W_lambda^k} and Lemma~\ref
{W_lambda in W_mu}, we have $\mathcal{S_\lambda\subseteq W}_{\lambda^k}
\subseteq\mathcal W_\mu$. Thus, if $s,t\in W_\mu$ then the identity $s\approx
t$ holds in $\mathcal S_\lambda$. This means that $\mathcal S_\lambda$
collapses $W_\mu$. We see that the variety $\mathcal S_\lambda$ collapses any
transversal it reduces, that is $\mathcal S_\lambda$ is greedy.
\end{proof}

Now we are well prepared to prove the `only if' part of Proposition \ref
{reduce}. Let an overcommutative variety $\mathcal V$ satisfy the
condition~(vi) of Theorem~\ref{main}. We need to verify that $\mathcal V$ is
greedy. It is evident that the variety $\mathcal{SEM}$ is greedy because it
does not reduce any transversal. Let now $\mathcal V=\bigwedge\limits_{i=1}^k
\mathcal S_{\lambda_i}$ for some partitions $\lambda_1,\lambda_2,\dots,
\lambda_k$. By Proposition~\ref{S_lambda is greedy}, the varieties $\mathcal
S_{\lambda_1}$, $\mathcal S_{\lambda_2}$, \dots, $\mathcal S_{\lambda_k}$ are
greedy. Proposition~\ref{greedy} implies now that all these varieties are
neutral elements of the lattice \textbf{OC}. It is well known that the set of
all neutral elements of a lattice $L$ forms a sublattice of $L$ (see~\cite
[Theorem~III.2.9]{Gratzer-98}, for instance). Therefore, $\mathcal V$ is a
neutral element of \textbf{OC}. Now we may apply Proposition~\ref{greedy}
again and conclude that $\mathcal V$ is greedy.

\section{Proof of Proposition \ref{reduce}: sufficiency}
\label{suff}

Here we are going to verify that a greedy overcommutative variety satisfies
the condition (vi) of Theorem~\ref{main}, thus completing the proof of
Proposition~\ref{reduce} and therefore, of Theorem~\ref{main}. We start with
a few easy observations.

\begin{lemma}
\label{if collapses then reduces}
If an overcommutative semigroup variety $\mathcal V$ reduces \textup(in
particular, collapses\textup) a transversal $W_\lambda$ then $\mathcal V$
reduces transversals $W_{\lambda^k}$ for all $k\ge0$.
\end{lemma}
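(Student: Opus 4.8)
The plan is to reduce everything to the single step $k\to k+1$ and then to argue that adjoining one extra simple letter to a non-trivial identity in $W_\lambda$ produces a non-trivial identity in $W_{\lambda^1}$. Concretely, suppose $\mathcal V$ reduces $W_{n,m,\lambda}$, so that $\mathcal V$ satisfies a non-trivial balanced identity $u\approx v$ with $u,v\in W_{n,m,\lambda}$. Since $\lambda^0=\lambda$, the case $k=0$ is trivial, and by an obvious induction it suffices to show that $\mathcal V$ reduces $W_{\lambda^1}=W_{n+1,m+1,\lambda^1}$; applying the conclusion repeatedly then yields $W_{\lambda^k}$ for all $k$.

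For the inductive step, I would take the identity $u\approx v$ witnessing that $\mathcal V$ reduces $W_\lambda$ and multiply it on the right by a fresh letter $x_{m+1}\notin c(u)=c(v)$, obtaining the identity $ux_{m+1}\approx vx_{m+1}$, which holds in $\mathcal V$. One checks directly that $\ell(ux_{m+1})=n+1$, $c(ux_{m+1})=\{x_1,\dots,x_{m+1}\}$, and $\partition(ux_{m+1}\approx vx_{m+1})=\lambda^1$, so (after the obvious reordering of letters into non-increasing order of multiplicity, which the variety effects automatically since it is overcommutative) $ux_{m+1}$ and $vx_{m+1}$ lie in $W_{n+1,m+1,\lambda^1}$. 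It remains only to observe that $ux_{m+1}\approx vx_{m+1}$ is non-trivial: indeed $u\not\equiv v$ implies $ux_{m+1}\not\equiv vx_{m+1}$. Hence $\mathcal V$ reduces $W_{\lambda^1}$, completing the induction.

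The parenthetical strengthening "(in particular, collapses)" needs no separate argument: if $\mathcal V$ collapses $W_\lambda$ then in particular it reduces $W_\lambda$ (a transversal always contains two distinct words once $m\ge2$, since one may permute the occurrences of a non-simple letter, except for the degenerate small cases which are handled by the convention built into the definitions), so the already-proved statement applies.

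The only point requiring a little care — and the one I would expect to be the main obstacle — is the bookkeeping showing $ux_{m+1}$ really represents an element of $W_{n+1,m+1,\lambda^1}$ after reindexing, i.e. that appending a simple letter to a word whose partition is $\lambda$ gives a word whose partition is exactly $\lambda^1=(\ell_1,\dots,\ell_m,1)$. This is immediate from the definitions of $\partition$ and of $\lambda^k$, but it is worth stating explicitly, since the whole of Section~\ref{suff} hinges on moving freely between $W_\lambda$ and $W_{\lambda^k}$.
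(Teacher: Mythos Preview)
Your proof is correct and follows essentially the same route as the paper: append fresh letters to a non-trivial identity in $W_\lambda$ to obtain a non-trivial identity with partition $\lambda^k$. The only cosmetic differences are that the paper adjoins all $k$ fresh letters $y_1,\dots,y_k$ in one stroke rather than inducting, and that your caveat about ``reordering'' is unnecessary---since $u\in W_{n,m,\lambda}$ already has $c(u)=\{x_1,\dots,x_m\}$ with $\ell_i(u)\ge\ell_{i+1}(u)$, the word $ux_{m+1}$ lies in $W_{n+1,m+1,\lambda^1}$ on the nose.
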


\begin{proof}
The case $k=0$ is obvious because $W_{\lambda^0}=W_\lambda$. Let now $k>0$.
Suppose that $\mathcal V$ satisfies a non-trivial identity of the kind $u
\approx v$ with $u,v\in W_\lambda$. Let $y_1,\dots,y_k$ be letters with $y_1,
\dots,y_k\notin c(u)$. The identity $uy_1\cdots y_k\approx vy_1\cdots y_k$ is
non-trivial and holds in $\mathcal V$ because it follows from $u\approx v$.
Since
$$\partition(uy_1\cdots y_k\approx vy_1\cdots y_k)=\lambda^k,$$
$\mathcal V$ reduces $W_{\lambda^k}$.
\end{proof}

\begin{lemma}
\label{V in S_lambda}
Let $\mathcal V$ be a greedy variety. If a non-trivial identity $u\approx v$
holds in $\mathcal V$ and $\partition(u\approx v)=\lambda$ then $\mathcal{V
\subseteq S_\lambda}$.
\end{lemma}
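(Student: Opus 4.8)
The plan is to show that $\mathcal V$ lies below each of the meetands defining $\mathcal S_\lambda$. Write $\lambda\in\Lambda_{n,m}$, so that by definition $\mathcal S_\lambda=\bigwedge_{i=0}^{s(\lambda)}\mathcal W_{n+i,m+i,\lambda^i}$, where $\lambda^i$ is the partition of $n+i$ into $m+i$ parts obtained by appending $i$ ones to $\lambda$. Since every $\mathcal W_{n+i,m+i,\lambda^i}$ is precisely the variety given by all identities $s\approx t$ with $s,t\in W_{\lambda^i}$, it will suffice to show that $\mathcal V$ satisfies all such identities for each $i\in\{0,1,\dots,s(\lambda)\}$, i.e.\ that $\mathcal V$ collapses each transversal $W_{\lambda^i}$ with $i\le s(\lambda)$.

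First I would observe that $\mathcal V$ reduces the transversal $W_\lambda$. Indeed, as $\mathcal V$ is overcommutative, the identity $u\approx v$ is balanced; moreover $\ell(u\approx v)=n$, $n(u\approx v)=m$, and $\partition(u\approx v)=\lambda$, so by the remark made in Section~\ref{prel} the identity $u\approx v$ is equivalent to some identity $s\approx t$ with $s,t\in W_{n,m,\lambda}$. Since $u\approx v$ is non-trivial, so is $s\approx t$, and hence $\mathcal V$ reduces $W_\lambda$. Now Lemma~\ref{if collapses then reduces} gives that $\mathcal V$ reduces the transversal $W_{\lambda^k}$ for every $k\ge0$; in particular $\mathcal V$ reduces $W_{\lambda^i}=W_{n+i,m+i,\lambda^i}$ for each $i\in\{0,1,\dots,s(\lambda)\}$. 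Because $\mathcal V$ is greedy, it collapses every transversal it reduces, so $\mathcal V$ collapses $W_{\lambda^i}$, that is $\mathcal V\subseteq\mathcal W_{n+i,m+i,\lambda^i}$, for all such $i$. Taking the meet over $i$ yields $\mathcal V\subseteq\bigwedge_{i=0}^{s(\lambda)}\mathcal W_{n+i,m+i,\lambda^i}=\mathcal S_\lambda$, as claimed.

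There is no genuinely hard step here: the proof is simply the definition of greediness applied to the family of transversals supplied by Lemma~\ref{if collapses then reduces}. The only place warranting a moment's care is the bookkeeping — checking that the transversals $W_{\lambda^i}$ appearing in the definition of $\mathcal S_\lambda$ (those with $i\le s(\lambda)$) form a subfamily of the transversals $W_{\lambda^k}$, $k\ge 0$, whose reduction by $\mathcal V$ is guaranteed by Lemma~\ref{if collapses then reduces} — after which greediness upgrades each reduction to a collapse and the inclusion follows.
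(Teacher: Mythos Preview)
Your proof is correct and follows exactly the same approach as the paper: apply Lemma~\ref{if collapses then reduces} to see that $\mathcal V$ reduces each $W_{\lambda^k}$, then use greediness to upgrade reduction to collapse, giving $\mathcal V\subseteq\mathcal W_{\lambda^i}$ for all $i\le s(\lambda)$ and hence $\mathcal V\subseteq\mathcal S_\lambda$. You have simply made explicit the verification that $\mathcal V$ reduces $W_\lambda$ (which the paper leaves implicit), but the argument is otherwise identical.
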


\begin{proof}
By Lemma~\ref{if collapses then reduces}, $\mathcal V$ reduces transversals
$W_{\lambda^k}$ for all $k=0,1,\dots,s(\lambda)$. Being greedy, $\mathcal V$
collapses all these transversals. Therefore, $\mathcal{V\subseteq
S_\lambda}$.
\end{proof}

\begin{corollary}
\label{S_lambda in S_mu}
Let $\lambda,\mu\in\Lambda$. Then $\mathcal{S_\lambda\subseteq S_\mu}$ if and
only if $\lambda\preceq\mu$.
\end{corollary}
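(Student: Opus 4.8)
The plan is to prove both implications using the machinery already assembled. For the `if' direction, suppose $\lambda\preceq\mu$, so $\mu=U_S\bigl(\lambda^k\bigr)$ for some finite set $S$ of unions of components and some $k\ge0$. By Corollary~\ref{S_lambda in W_lambda^k} we have $\mathcal S_\lambda\subseteq\mathcal W_{\lambda^k}$, and by Lemma~\ref{W_lambda in W_mu} (applied to $\lambda^k$ in place of $\lambda$, which is legitimate since $\mu=U_S(\lambda^k)$) we get $\mathcal W_{\lambda^k}\subseteq\mathcal W_\mu$. Hence $\mathcal S_\lambda\subseteq\mathcal W_\mu$. It remains to pass from $\mathcal W_\mu$ to $\mathcal S_\mu=\bigwedge_{i=0}^{s(\mu)}\mathcal W_{\mu^i}$; for this it suffices to check $\mathcal S_\lambda\subseteq\mathcal W_{\mu^i}$ for each $i=0,1,\dots,s(\mu)$. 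But $\lambda\preceq\mu\preceq\mu^i$ (the second relation holds since $\mu^i=U_\varnothing\bigl(\mu^i\bigr)$), so $\lambda\preceq\mu^i$ by transitivity of $\preceq$ (Lemma~\ref{le is order}), and the argument just given with $\mu^i$ in place of $\mu$ yields $\mathcal S_\lambda\subseteq\mathcal W_{\mu^i}$. Taking the meet over all such $i$ gives $\mathcal S_\lambda\subseteq\mathcal S_\mu$.

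For the `only if' direction, suppose $\mathcal S_\lambda\subseteq\mathcal S_\mu$. We know $\mathcal S_\mu$ is greedy by Proposition~\ref{S_lambda is greedy}, hence a neutral element of $\mathbf{OC}$, but what we actually need is more concrete: by Corollary~\ref{S_lambda in W_lambda^k}, $\mathcal S_\mu\subseteq\mathcal W_\mu$, so $\mathcal S_\mu$ satisfies every identity $u\approx v$ with $u,v\in W_\mu$; in particular it satisfies a non-trivial such identity (any transversal $W_\mu$ contains at least two distinct words since $m(\mu)\ge2$), say $u\approx v$ with $\partition(u\approx v)=\mu$. Since $\mathcal S_\lambda\subseteq\mathcal S_\mu$, this non-trivial identity also holds in $\mathcal S_\lambda$. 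Now apply Lemma~\ref{lambda less u=v}: a non-trivial identity $u\approx v$ holding in $\mathcal S_\lambda$ forces $\lambda\preceq\partition(u\approx v)=\mu$. This is exactly what we wanted.

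The main obstacle, such as it is, lies in the `if' direction — specifically in not conflating $\mathcal W_\mu$ with $\mathcal S_\mu$. One must remember that $\mathcal S_\mu$ is defined as a meet over the chain $\mu\preceq\mu^1\preceq\cdots\preceq\mu^{s(\mu)}$, so the inclusion has to be verified termwise, each term being handled by the combination of Corollary~\ref{S_lambda in W_lambda^k} and Lemma~\ref{W_lambda in W_mu} together with transitivity of $\preceq$. Everything else is a direct invocation of results already proved. I expect the whole argument to be short.
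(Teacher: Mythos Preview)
Your proof is correct. The `only if' direction is essentially identical to the paper's (you are just a bit more explicit about why a non-trivial identity with partition $\mu$ exists).

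For the `if' direction, the paper takes a different route: it picks a non-trivial identity $u\approx v$ with $\partition(u\approx v)=\lambda$, uses $\lambda\preceq\mu$ to produce from it a non-trivial identity $s\approx t$ with $\partition(s\approx t)=\mu$ that also holds in $\mathcal S_\lambda$, and then invokes Proposition~\ref{S_lambda is greedy} (that $\mathcal S_\lambda$ is greedy) together with Lemma~\ref{V in S_lambda} to conclude $\mathcal S_\lambda\subseteq\mathcal S_\mu$. Your argument instead verifies the inclusion $\mathcal S_\lambda\subseteq\mathcal W_{\mu^i}$ termwise, directly from Corollary~\ref{S_lambda in W_lambda^k} and Lemma~\ref{W_lambda in W_mu}, without passing through the notion of greediness. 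Both approaches ultimately rest on the same two lemmas (the proof of Proposition~\ref{S_lambda is greedy} is itself built from them), so the difference is mainly one of packaging: the paper's version is slightly more conceptual, while yours is more elementary and avoids the small implicit step of manufacturing an identity with partition $\mu$ from one with partition $\lambda$.
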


\begin{proof}
\emph{Necessity}. Suppose that $\mathcal{S_\lambda\subseteq S_\mu}$. Let $u
\approx v$ be an identity with $\partition(u\approx v)=\mu$. Then $u\approx
v$ holds in $\mathcal S_\mu$, whence it holds in $\mathcal S_\lambda$. Now
Lemma~\ref{lambda less u=v} applies with the conclusion that $\lambda\preceq
\mu$.

\emph{Sufficiency}. Let $\lambda\preceq\mu$ and $u\approx v$ an identity with
$\partition(u\approx v)=\lambda$. Since $\lambda\preceq\mu$, there is an
identity $s\approx t$ such that $u\approx v$ implies $s\approx t$ and
$\partition(s\approx t)=\mu$. The variety $\mathcal S_\lambda$ satisfies the
identity $u\approx v$. Hence $s\approx t$ holds in $\mathcal S_\lambda$ as
well. According to Proposition~\ref{S_lambda is greedy}, the variety
$\mathcal S_\lambda$ is greedy. Now Lemma~\ref{V in S_lambda} succsessfully
applies with the conclusion that $\mathcal{S_\lambda\subseteq S_\mu}$.
\end{proof}

Now we are ready to prove the `if' part of Proposition \ref{reduce}. Let
$\mathcal V$ be a greedy variety and $\mathcal{V\ne SEM}$. The last
unequality means that $\mathcal V$ satisfies some non-trivial identity $u
\approx v$. Put $\lambda=\partition(u\approx v)$. Lemma~\ref{V in S_lambda}
shows that $\mathcal{V\subseteq S_\lambda}$. Thus, the set $\Gamma=\{\lambda
\in\Lambda\mid\mathcal{V\subseteq S_\lambda}\}$ is non-empty. Put $\mathcal
X=\bigwedge\limits_{\lambda\in\Gamma}\mathcal S_\lambda$. Clearly, $\mathcal
{V\subseteq X}$. Suppose that $\mathcal{V\ne X}$. Then there is an identity
$u\approx v$ that holds in $\mathcal V$ but fails in $\mathcal X$. Let $\mu=
\partition(u\approx v)$. By Lemma~\ref{V in S_lambda}, $\mathcal{V\subseteq
S_\mu}$. This means that $\mu\in\Gamma$, whence $\mathcal{X\subseteq S_\mu}$.
Since $\mathcal W_\mu$ satisfies $u\approx v$ and $\mathcal{X\subseteq S_\mu
\subseteq W_\mu}$, we have that the identity $u\approx v$ holds in $\mathcal
X$. A contradiction shows that $\mathcal{V=X}$.

Lemma~\ref{min condition} and Corollary~\ref{S_lambda in S_mu} imply together
that $\mathcal V=\bigwedge\limits_{\lambda\in\Gamma'}\mathcal S_\lambda$
where $\Gamma'$ is the set of all minimal elements of the partially ordered
set $\langle\Gamma;\preceq\rangle$. Since $\Gamma'$ forms an anti-chain in
$\langle\Lambda;\preceq\rangle$, Lemma~\ref{antichains} implies that the set
$\Gamma'$ is finite. Thus, $\mathcal V$ satisfies the condition~(vi) of
Theorem~\ref{main}.

Proposition~\ref{reduce} and Theorem~\ref{main} are proved.\qed

\section{Additional remarks}
\label{optim}

Here we are going to show that the description of the varieties under
consideration given by Theorem~\ref{main} may not be improved, in a sense.
Theorem~\ref{main} shows that the varieties of the kind $\mathcal S_\lambda$
play the crucial role in the description of varieties we consider in this
article. Recall that the variety $\mathcal S_\lambda$ is defined as the
intersection of the varieties $\mathcal W_{\lambda^i}$ where $i$ runs over
the set $\{0,1,\dots,s(\lambda)\}$. A natural question arises, whether or not
the number $s(\lambda)$ may be changed on some lesser number here.

For any $\lambda\in\Lambda$ and $k\in\{0,1,\dots,s(\lambda)\}$, we put
$$\mathcal S_\lambda^k=\bigwedge_{i=0}^k\mathcal W_{\lambda^i}\ldotp$$
In particular, $\mathcal S_\lambda^0=\mathcal W_\lambda$ and $\mathcal
S_\lambda^{s(\lambda)}=\mathcal S_\lambda$. The crucial property of the
variety $\mathcal S_\lambda$ is given by Proposition~\ref
{S_lambda is greedy}: this variety is greedy. The following statement
together with Lemma~\ref{if collapses then reduces} show that varieties
$\mathcal S_\lambda^k$ with $k<s(\lambda)$ does not have this property. Thus,
the question posed in the previous paragraph is answered in negative.

\begin{proposition}
\label{optimum}
Let $\lambda\in\Lambda$, $s(\lambda)>0$, and $0\le k<s(\lambda)$. Then the
variety $\mathcal S_\lambda^k$ does not collapse the transversals
$W_{\lambda^{k+1}}$, $W_{\lambda^{k+2}}$, \dots, $W_{\lambda^{s(\lambda)}}$.
\end{proposition}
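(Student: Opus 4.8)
The plan is to show that for each $j$ with $k+1 \le j \le s(\lambda)$, the variety $\mathcal{S}_\lambda^k = \bigwedge_{i=0}^k \mathcal{W}_{\lambda^i}$ fails to collapse $W_{\lambda^j}$ by exhibiting two explicit words $u, v \in W_{\lambda^j}$ such that the identity $u \approx v$ does \emph{not} hold in $\mathcal{S}_\lambda^k$. Since $\mathcal{S}_\lambda^k$ is defined by a balanced identity system whose members all have partitions among $\lambda^0, \lambda^1, \dots, \lambda^k$, a deduction of any identity $u \approx v$ with $\partition(u \approx v) = \lambda^j$ from this system proceeds through words all of partition $\lambda^j$, and each elementary step $w_i \stackrel{\Sigma}{\longrightarrow} w_{i+1}$ substitutes (via an endomorphism $\zeta$) an instance of one of the defining identities. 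Because $n(\lambda^j) = m(\lambda) + j > m(\lambda) + k = n(\lambda^i)$ for every defining partition $\lambda^i$, and the ambient word has length $\ell(\lambda^j) = n(\lambda) + j$ which exceeds $\ell(\lambda^i) = n(\lambda) + i$, any such step must either map a letter $x_p$ of the identity to a genuine multi-letter word $\zeta(x_p)$, or else be flanked by a non-empty context word — in either case the step consumes some of the `extra' letters beyond the $m(\lambda)+k$ available in a defining transversal.

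The key step is to design an invariant on words of $W_{\lambda^j}$ that is preserved by every elementary deduction step from $\mathcal{S}_\lambda^k$ but distinguishes two words of $W_{\lambda^j}$. Guided by the definition of $s(\lambda) = \max\{r(\lambda) - q(\lambda) - \delta, 0\}$ and the proof of Lemma~\ref{s(lambda)=0} (where the critical obstruction was the inability to isolate simple letters when $r(\lambda) > q(\lambda) + 1$), I expect the right invariant to concern the \emph{arrangement of the non-simple letters of $\lambda$ relative to the simple ones and to the ends of the word}. Concretely: a word in $W_{\lambda^j}$ has $q(\lambda) + j$ simple letters and a block-structure of non-simple letters with total length $r(\lambda)$; one should track something like the number of `adjacencies' among simple letters (i.e.\ subwords $x_p x_q$ with both simple, plus simple letters at the two ends). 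Since the defining identities of $\mathcal{S}_\lambda^k$ have at most $m(\lambda)+k$ letters of which at least $m(\lambda)+k - r(\lambda)$ (roughly $q(\lambda)+k$ minus a correction) are simple, while the requirement $k < s(\lambda)$ forces $r(\lambda) > q(\lambda) + k + \delta$, an instance of any such identity inside a word of $W_{\lambda^j}$ can only rearrange letters while keeping enough simple-letter-adjacencies intact — so a word chosen with \emph{no} simple-letter adjacencies and no simple letter at either end cannot be transformed into the canonical word $x_1^{\ell_1}\cdots x_m^{\ell_m} x_{m+1}\cdots x_{m+j}$, which ends with a run of simple letters.

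The order of work would be: (1) fix $j \in \{k+1, \dots, s(\lambda)\}$ and record the numerical inequality $r(\lambda) > q(\lambda) + k + \delta$ coming from $k < s(\lambda)$, together with its consequence that every defining identity of $\mathcal{S}_\lambda^k$, when its letters are classified as simple or non-simple, has `too few simple letters to cover $r(\lambda)$'; (2) construct a word $u_j \in W_{\lambda^j}$ in which every maximal run of non-simple letters is flanked (on both sides, within the word) by non-simple letters — possible precisely because there are enough non-simple letters, i.e.\ $r(\lambda)$ is large — so that $u_j$ has no subword $x_p x_q$ with both letters simple and does not begin or end with a simple letter; (3) define the invariant (the count of simple-simple adjacencies including the two word-ends) and verify it is non-decreasing, or constant, under $\stackrel{\Sigma}{\longrightarrow}$ for $\Sigma$ the defining system of $\mathcal{S}_\lambda^k$, using the length/letter-count discrepancy from step~(1) to rule out a step that creates such an adjacency; (4) observe the canonical word has positive invariant while $u_j$ has invariant zero, conclude $u_j \not\approx (\text{canonical word})$ in $\mathcal{S}_\lambda^k$, hence $\mathcal{S}_\lambda^k$ does not collapse $W_{\lambda^j}$.

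The main obstacle is step~(3): making the invariant genuinely \emph{monotone} under a single elementary step. A step $w \equiv a\zeta(s)b \to a\zeta(t)b$ with $s,t \in W_{\lambda^i}$ ($i \le k$) rewrites the block $\zeta(s)$ into $\zeta(t)$; the letters of $\zeta(s)$ inside $w$ may individually be simple-in-$w$ or not, and $\zeta$ may glue letters together, so the bookkeeping of how many simple-simple adjacencies lie inside versus straddle the boundary of the rewritten block is delicate. One must argue that because $s$ has at most $m(\lambda)+k$ distinct letters and $\partition(w) = \lambda^j$ has $m(\lambda)+j \ge m(\lambda)+k+1$ distinct letters, there is at least one letter of $\lambda^j$ not touched as a whole-letter image, forcing enough non-simple `mass' to survive in the rewritten region to block the creation of a new simple-simple pair. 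Getting the constant $\delta$ to come out exactly right in the small exceptional case $\lambda = (2,1)$, $n = 3$, $m = 2$ will require a separate direct check, as it did in the earlier lemmas.
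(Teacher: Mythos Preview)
Your plan is essentially the paper's: you build the same witness word $u_j\in W_{\lambda^j}$ in which every simple letter is isolated between non-simple letters and neither end is simple (possible exactly because $j\le s(\lambda)$ gives $r(\lambda)\ge q(\lambda)+j+1$), and you correctly flag the exceptional case $\lambda=(2,1)$ for a separate direct check.

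The one place where your plan is more laborious than necessary is step~(3). You propose to track a numerical invariant (the simple--simple adjacency count) through an entire deduction and argue monotonicity via a letter-count discrepancy; you yourself call the bookkeeping ``delicate''. The paper bypasses this entirely by showing that \emph{no elementary step is possible from $u_j$ at all}. The key observation you are missing is that $r(\lambda^i)=r(\lambda)=r(\lambda^j)$ for every $i$, so in any purported first step $u_j\equiv a\zeta(s)b$ with $s\in W_{\lambda^i}$ one has
\[
r(\partition(s))=r(\lambda)=r(\partition(u_j))=r(\partition(a\zeta(s)b)),
\]
while trivially $r(\partition(s))\le r(\partition(\zeta(s)))\le r(\partition(a\zeta(s)b))$. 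Equality throughout forces: $ab$ contains only simple-in-$u_j$ letters (hence $a,b$ are empty, since $u_j$ begins and ends non-simple); $\zeta$ sends each non-simple letter of $s$ to a single letter; and $\zeta$ sends each simple letter of $s$ to a word in simple-in-$u_j$ letters, which by the no-adjacency structure of $u_j$ must again be a single letter. Thus $\ell(s)=\ell(u_j)$, i.e.\ $i=j>k$, contradicting $i\le k$. So $u_j$ is isolated, and $\mathcal S_\lambda^k$ cannot collapse $W_{\lambda^j}$. This $r$-conservation argument is what makes your step~(3) go through cleanly, replacing the vaguer letter-count reasoning you sketched. (Also note a slip: you wrote ``non-decreasing'' where you need ``non-increasing'', or simply ``the value $0$ is preserved''.)
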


\begin{proof}
Let $i\in\{k+1,\dots,s(\lambda)\}$. Suppose that $\mathcal S_\lambda^k$
collapses the transversal $W_{\lambda^i}$. Further considerations are divided
into two cases.

\emph{Case} 1: $\lambda\ne(2,1)$. The definition of the number $s(\lambda)$
and the inequality $s(\lambda)>0$ imply that $s(\lambda)=r(\lambda)-q
(\lambda)-1$ here. Since $s(\lambda)\ge i$, we have $r(\lambda)-q(\lambda)-1
\ge i$. Evident equalities $r\bigl(\lambda^i\bigr)=r(\lambda)$ and $q\bigl
(\lambda^i\bigr)=q(\lambda)+i$ then imply that $r\bigl(\lambda^i\bigr)\ge q
\bigl(\lambda^i\bigr)+1$. Hence the transversal $W_{\lambda^i}$ contains a
word $u$ of the kind
$$u\equiv w_1y_1w_2y_2\cdots w_{q+i}y_{q+i}w_{q+i+1}$$
where $y_1,y_2,\dots,y_{q+i}$ are simple in $u$ letters, while $w_1,w_2,
\dots,w_{q+i+1}$ are non-empty words such that the word $w_1w_2\cdots
w_{q+i+1}$ does not contain simple in $u$ letters. Let $v\in W_{\lambda^i}$
and $v\not\equiv u$. Since $\mathcal S_\lambda^k$ collapses $W_{\lambda^i}$,
the identity $u\approx v$ holds in $S_\lambda^k$. Therefore, this identity
follows from the identity system
$$\Sigma=\{g\approx h\mid\ \text{there is}\ j\in\{0,1,\dots,k\}\ \text{with}\
g,h\in W_{\lambda^j}\}\ldotp$$
Let~\eqref{deduct} be a deduction of $u\approx v$ from $\Sigma$. We have $u
\equiv a\zeta(s)b$ and $w_1\equiv a\zeta(t)b$ for some homomorphism $\zeta$
on $F$, some $a,b\in F^1$, and some $s,t\in W_{\lambda^j}$ where $j\in\{0,1,
\dots,k\}$. Furthermore,
\begin{equation}
\label{equalities}
r(\partition(s))=r\bigl(\lambda^j\bigr)=r\bigl(\lambda^i\bigr)=r(\partition
(u))=r(\partition(a\zeta(s)b))\ldotp
\end{equation}
On the other hand, it is evident that
\begin{equation}
\label{unequalities}
r(\partition(s))\le r(\partition(\zeta(s)))\le r(\partition(a\zeta(s)b))
\ldotp
\end{equation}
Combining~\eqref{equalities} and~\eqref{unequalities}, we have
\begin{equation}
\label{one more equalities}
r(\partition(s))=r(\partition(\zeta(s)))=r(\partition(a\zeta(s)b))\ldotp
\end{equation}
This implies that the subword $\zeta(s)$ of the word $a\zeta(s)b$ contains
all occurrences of non-simple in $a\zeta(s)b$ letters, whence all letters
from $c(ab)$ are simple in $u$. But the word $a\zeta(s)b\equiv u$ starts and
ends with non-simple in $u$ letters. Therefore, the words $a$ and $b$ are
empty. Thus, $u\equiv\zeta(s)$. If either there is a non-simple in $s$ letter
$x$ with $\ell(\zeta(x))>1$ or there is a simple in $s$ letter $y$ such that
the word $\zeta(y)$ contains some non-simple in $u$ letter then $r(\partition
(\zeta(s)))>r(\partition(s))$, contradicting~\eqref{one more equalities}.
Hence $\zeta$ maps every non-simple in $s$ letter to a letter and maps every
simple in $s$ letter to a word consisting of simple letters. If $y$ is a
simple in $s$ letter then $\zeta(y)$ is a subword of $u$. But $u$ does not
contain subwords consisting of simple letters except subwords of length~1,
that is letters. Thus, $\zeta$ maps a simple in $s$ letter to a simple in
$\zeta(s)\equiv u$ letter. We conclude that $\ell(u)=\ell(\zeta(s))=\ell(s)$.
But $\ell(s)=n+j$ and $\ell(u)=n+i$ where $n=\ell(\lambda)$. Therefore, $j=
i$. But this is impossible because $i\ge k+1$, while $j\le k$.

\emph{Case} 2: $\lambda=(2,1)$. Here $r(\lambda)=2$, $q(\lambda)=1$, and
$\delta=0$, whence $s(\lambda)=1$. Therefore, $k=0$ and $i=1$. This means
that $\mathcal S_\lambda^k=\mathcal W_\lambda$ and $\lambda^i=(2,1,1)$. Let
$u\equiv x^2yz$ and $v\equiv x^2zy$. Suppose that the identity $u\approx v$
holds in $\mathcal W_\lambda$. Then it follows from the identity system
$$\Sigma=\{x^2y\approx xyx\approx yx^2\}\ldotp$$
Let~\eqref{deduct} be a deduction of $u\approx v$ from $\Sigma$. Then there
is $j\in\{0,1,\dots,\ell\}$ such that the first occurrence of $z$ in the word
$w_j$ precedes the first occurrence of $y$ in $w_j$. Let $j$ be the least
number with such a property. It is evident that $j>0$. Thus, the following
holds:
\begin{align}
\label{w_j-1}&w_{j-1}\in\{x^2yz,xyxz,xyzx,yx^2z,yxzx,yzx^2\},\\
\label{w_j}&w_j\in\{x^2zy,xzxy,xzyx,zx^2y,zxyx,zyx^2\}\ldotp
\end{align}
Furthermore, $w_{j-1}\equiv a\zeta(s)b$ and $w_j\equiv a\zeta(t)b$ for some
homomorphism $\zeta$ on $F$, some $a,b\in F^1$, and some $s,t\in\{x^2y,xyx,y
x^2\}$. Repeating mutatis mutandi arguments from Case~1), we obtain that $r
(\partition(w_{j-1}))=r(\partition(w_j))=2$ and deduct from these equalities
that $x\notin c(ab)$, $\zeta(x)$ is a letter, and $\zeta(y)\in\{y,z,yz,zy\}$.
If $\zeta(x)\equiv e$ then $e$ is a non-simple in $w_{j-1}$ letter. In view
of~\eqref{w_j-1}, $\zeta(x)\equiv x$.

If $\zeta(y)\equiv yz$ then the word $w_j$ contains the subword $yz$,
contradicting~\eqref{w_j}. Analogously, if $\zeta(y)\equiv zy$ then the word
$w_{j-1}$ contains the subword $zy$, contradicting~\eqref{w_j-1}.

Suppose now that $\zeta(y)\equiv y$. Then $\zeta(s)\equiv s$ and $\zeta(t)
\equiv t$. Since $\zeta(s)$ is a subword in $w_{j-1}$, this means that one of
the words $x^2y$, $xyx$ or $yx^2$ is a subword in $w_{j-1}$. In view
of~\eqref{w_j-1}, this means that $w_{j-1}$ coincides with one of the words
$x^2yz$, $xyxz$ or $yx^2z$. Thus, the word $a$ is empty and therefore, $w_j
\equiv\zeta(t)b\equiv tb$. Since $z\notin c(t)$, we have that the first
occurrence of $y$ in $w_j$ precedes the first occurrence of $z$ in $w_j$. But
this contradicts the choice of the number $j$.

Finally, let $\zeta(y)\equiv z$.  Then $\zeta(s)\in\{x^2z,xzx,zx^2\}$, whence
$w_{j-1}$ coincides with one of the words $ax^2zb$, $axzxb$ or $azx^2b$. In
view of~\eqref{w_j-1}, this means that $w_{j-1}\in\{yx^2z,yxzx,yzx^2\}$.
Therefore, $a\equiv y$. Thus, the word $w_j$ starts with the letter $y$. As
in the previous paragraph, we have that the first occurrence of $y$ in $w_j$
precedes the first occurrence of $z$ in $w_j$ that contradicts the choice of
$j$.

We prove that the variety $\mathcal{W_\lambda=S}_\lambda^k$ does not satisfy
the identity $x^2yz\approx x^2zy$. Since $\partition(x^2yz\approx x^2zy)=(2,
1,1)=\lambda^i$, we have that $\mathcal S_\lambda^k$ does not collapse the
transversal $W_{\lambda^i}$.
\end{proof}

One can return to the definition of the variety $\mathcal S_\lambda$. It may
be written in the form
\begin{equation}
\label{definition of S__lambda}
\mathcal S_\lambda=\bigwedge_{\mu\in\Gamma}\mathcal W_\mu
\end{equation}
where $\Gamma=\{\lambda^k\mid k=0,1,\dots,s(\lambda)\}$. The following
assertion shows that the set $\{\lambda^k\mid k=0,1,\dots,s(\lambda)\}$ is
the least set of partitions $\Gamma$ such that the equality~\eqref
{definition of S__lambda} holds.

\begin{corollary}
\label{representation of S_lambda}
If the equality~\textup{\eqref{definition of S__lambda}} holds for some
$\Gamma\subseteq\Lambda$ then $\lambda^k\in\Gamma$ for all $k=0,1,\dots,s
(\lambda)$.
\end{corollary}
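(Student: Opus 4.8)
The plan is to argue by contradiction. Suppose that~\eqref{definition of S__lambda} holds for some $\Gamma\subseteq\Lambda$ but $\lambda^k\notin\Gamma$ for some $k\in\{0,1,\dots,s(\lambda)\}$; I will derive a contradiction with Proposition~\ref{optimum}. Set $n=n(\lambda)$ and $m=m(\lambda)$. Since $k\le s(\lambda)$, the definition of $\mathcal S_\lambda$ gives $\mathcal S_\lambda\subseteq\mathcal W_{\lambda^k}$, so $\mathcal S_\lambda$ collapses the transversal $W_{\lambda^k}$. By~\eqref{definition of S__lambda} we have $\mathcal S_\lambda=\var\Sigma$, where $\Sigma=\{\,s\approx t\mid s,t\in W_\mu\ \text{for some}\ \mu\in\Gamma\,\}$ is a system of balanced identities; hence every identity $u\approx v$ with $u,v\in W_{\lambda^k}$ follows from $\Sigma$. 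The main idea is to inspect an arbitrary deduction~\eqref{deduct} of such an identity and to show that only partitions $\mu$ with $n(\mu)<n+k$ can actually be used in it; this will let me replace $\Gamma$ by a ``shorter'' set of partitions.

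First I would assemble the relevant facts. For every $\mu\in\Gamma$ the variety $\mathcal S_\lambda$ is contained in $\mathcal W_\mu$, which satisfies a non-trivial identity with partition $\mu$ (any $s\approx t$ with $s\not\equiv t$ in $W_\mu$); Lemma~\ref{lambda less u=v} then yields $\lambda\preceq\mu$, so $\mu=U_S\bigl(\lambda^j\bigr)$ with $j=n(\mu)-n\ge0$ for a suitable $S$. Next, in a deduction of an identity with partition $\lambda^k$ every word again has partition $\lambda^k$, since the identities in $\Sigma$ are balanced. Therefore, in a step of the deduction with $w_i\equiv a\zeta(s)b$, $w_{i+1}\equiv a\zeta(t)b$, and $s,t\in W_\mu$, one has $n+k=\ell(w_i)\ge\ell(s)=n(\mu)$, i.e.\ $j\le k$. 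The crucial point is to rule out $j=k$: if $n(\mu)=n+k$, then $a$ and $b$ must be empty and $\ell(\zeta(s))=\ell(s)$, so Lemma~\ref{poor xi} gives $\mu=\partition(s)\preceq\partition(\zeta(s))=\lambda^k$; combined with $\lambda\preceq\mu$ and with $n(\mu)=n(\lambda^k)$, the antisymmetry of $\preceq$ (Lemma~\ref{le is order}) forces $\mu=\lambda^k$, contradicting $\lambda^k\notin\Gamma$. Hence no step of the deduction uses a partition $\mu\in\Gamma$ with $n(\mu)=n+k$.

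To conclude I would distinguish two cases. If $k=0$, then no $\mu\in\Gamma$ is usable in a step at all (each $\mu\in\Gamma$ satisfies $n(\mu)\ge n$, whereas a step would require $n(\mu)\le n-1$), so the deduction is empty and $u\equiv v$; since $W_\lambda$ contains distinct words this is absurd, so $\lambda\in\Gamma$. If $k\ge1$, put $\Gamma''=\{\,\mu\in\Gamma\mid n(\mu)\le n+k-1\,\}$; by the previous paragraph every deduction of an identity $u\approx v$ with $u,v\in W_{\lambda^k}$ is in fact a deduction from $\{\,s\approx t\mid s,t\in W_\mu\ \text{for some}\ \mu\in\Gamma''\,\}$, so $\bigwedge_{\mu\in\Gamma''}\mathcal W_\mu$ collapses $W_{\lambda^k}$. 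On the other hand, each $\mu\in\Gamma''$ has the form $U_S\bigl(\lambda^j\bigr)$ with $0\le j\le k-1$, so Lemma~\ref{W_lambda in W_mu} gives $\mathcal W_{\lambda^j}\subseteq\mathcal W_\mu$ and hence $\mathcal S_\lambda^{k-1}=\bigwedge_{i=0}^{k-1}\mathcal W_{\lambda^i}\subseteq\mathcal W_\mu$; consequently $\mathcal S_\lambda^{k-1}\subseteq\bigwedge_{\mu\in\Gamma''}\mathcal W_\mu$, so $\mathcal S_\lambda^{k-1}$ collapses $W_{\lambda^k}$. Since $1\le k\le s(\lambda)$, this contradicts Proposition~\ref{optimum}, so $\lambda^k\in\Gamma$ in this case too.

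The step I expect to be the main obstacle is the rigidity argument of the second paragraph: a priori $\Gamma$ may contain many superfluous partitions $U_S\bigl(\lambda^j\bigr)$, and one has to exclude the possibility that such a partition of the critical length $n+k$ silently does the job that $\mathcal W_{\lambda^k}$ is meant to do. The length restriction on a single deduction step, Lemma~\ref{poor xi}, and the antisymmetry of $\preceq$ together are precisely what pin the only usable such partition down to $\lambda^k$ itself.
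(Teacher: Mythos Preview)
Your argument is correct and follows the same overall strategy as the paper: analyse a deduction of an identity with partition $\lambda^k$ from the defining system of $\bigwedge_{\mu\in\Gamma}\mathcal W_\mu$, show that every step uses a partition ``strictly below'' $\lambda^k$, and conclude via Proposition~\ref{optimum}. The technical route differs slightly. The paper applies Lemma~\ref{lambda less u=v} to $\mathcal S_\mu$ (not just to $\mathcal S_\lambda$) and obtains $\lambda\preceq\mu\preceq\lambda^k$ for every step, from which a short computation forces $\mu=\lambda^j$ exactly, with $j<k$; the step then visibly holds in $\mathcal W_{\lambda^j}\supseteq\mathcal S_\lambda^{k-1}$. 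You instead bound only $n(\mu)$ via a length argument, rule out $n(\mu)=n+k$ with Lemma~\ref{poor xi} and antisymmetry, and then absorb a possibly nontrivial $U_S$ using Lemma~\ref{W_lambda in W_mu}. Your version also treats the case $k=0$ explicitly, which the paper leaves implicit. Both routes reach the same contradiction; yours trades the sharper conclusion $\mu=\lambda^j$ for an extra invocation of Lemma~\ref{W_lambda in W_mu}.
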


\begin{proof}
Suppose that $\lambda^k\notin\Gamma$ for some $k\in\{0,1,\dots,s(\lambda)\}$.
Let $u,v\in W_{\lambda^k}$. The definition of the variety
$\mathcal S_\lambda$ implies that the identity $u\approx v$ holds in
$\mathcal S_\lambda$. Therefore, this identity follows from the identity
system
$$\Sigma=\{g\approx h\mid\text{there is}\ \mu\in\Gamma\ \text{such that}\ g,h
\in W_\mu\}\ldotp$$
As usual, let~\eqref{deduct} be a deduction of $u\approx v$ from $\Sigma$.
Let $i\in\{0,1,\dots,\ell-1\}$. Then the identity $w_i\approx w_{i+1}$
follows from an identity of the kind $s\approx t$ where $s,t\in W_\mu$ for
some $\mu\in\Gamma$. The identity $s\approx t$ holds in the variety $\mathcal
S_\mu$. Therefore, $u\approx v$ holds in $\mathcal S_\mu$ too. Then
Lemma~\ref{lambda less u=v} implies that $\mu\preceq\partition(u\approx v)=
\lambda^k$. Furthemore, the identity $s\approx t$ holds in the variety
$\mathcal S_\lambda$ because $\mathcal{S_\lambda\subseteq W_\mu}$. Applying
Lemma~\ref{lambda less u=v} again, we have $\lambda\preceq\mu$. Therefore,
$\mu=U_S\bigl(\lambda^j\bigr)$ and $\lambda^k=U_T\bigl(\mu^q\bigr)$ for some
finite (may be empty) sets of partitions $S$ and $T$ and some non-negative
integers $j$ and $q$.

Let $n(\lambda)=n$. Then $n(\mu)=n+j$, while $n\bigl(\lambda^k\bigr)$ equals
both $n+k$ (that is evident) and $n+j+q$ (because $n\bigl(\lambda^k\bigr)=n
(\mu)+q$). Therefore, $n+k=n+j+q$, whence $q=k-j$. Thus, $\mu=U_S\bigl
(\lambda^j\bigr)$ and $\lambda^k=U_T\bigl(\mu^{k-j}\bigr)$. Hence $\lambda^k=
U_{S\cup T}\bigl(\lambda^k\bigr)$ and therefore, $S=T=\varnothing$. In
particular, this means that $\mu=U_\varnothing\bigl(\lambda^j\bigr)=
\lambda^j$. We see that $\lambda^j=\mu\preceq\lambda^k$, whence $j\le k$. But
$j\ne k$ because $\lambda^k\notin\Gamma$, while $\lambda^j=\mu\in\Gamma$.
Hence $j<k$. Besides that, the equality $\mu=\lambda^j$ implies that the
identity $w_i\approx w_{i+1}$ holds in the variety $\mathcal W_{\lambda^j}$.
Since $j\le k-1$, we have $\mathcal S_\lambda^{k-1}\subseteq\mathcal
W_{\lambda^j}$. Thus, $w_i\approx w_{i+1}$ holds in $\mathcal
S_\lambda^{k-1}$. This is the case for all $i=0,1,\dots,\ell-1$. Therefore,
the identity $u\approx v$ holds in $\mathcal S_\lambda^{k-1}$ too. This is
valid for all $u,v\in W_{\lambda^k}$. Hence the variety $\mathcal
S_\lambda^{k-1}$ collapses the transversal $W_{\lambda^k}$, contradicting
Proposition~\ref{optimum}.
\end{proof}

\section{Open problems}
\label{quest}

Recall that an element $x$ of a lattice $L$ is called \emph{modular} if
$$\forall y,z\in L\colon\quad y\le z\longrightarrow(x\vee y)\wedge z=(x\wedge
z)\vee y,$$
and \emph{upper-modular} if
$$\forall y,z\in L\colon\quad y\le x\longrightarrow x\wedge(y\vee z)=(x\wedge
z)\vee y\ldotp$$
\emph{Lower-modular} elements are defined dually to upper-modular ones.

\begin{problem}
\label{mod in OC}
Describe
\begin{itemize}
\item[\textup{a)}]modular;
\item[\textup{b)}]upper-modular;
\item[\textup{c)}]lower-modular
\end{itemize}
elements of the lattice $\mathbf{OC}$.
\end{problem}

As we have already mentioned in Section~\ref{intr}, neutral elements of the
lattice \textbf{SEM} are completely determined in~\cite{Volkov-05}, while
distributive elements of this lattice are completely described in~\cite
{Vernikov-Shaprynskii-distr}. It is interesting to note that Proposition~\ref
{greedy} plays an important role in the proof of the result of~\cite
{Vernikov-Shaprynskii-distr}.

\begin{problem}
\label{distr in SEM}
Describe
\begin{itemize}
\item[\textup{a)}]codistributive;
\item[\textup{b)}]standard;
\item[\textup{c)}]costandard
\end{itemize}
elements of the lattice $\mathbf{SEM}$.
\end{problem}

Some particular results concerning Problem~\ref{distr in SEM}a) are obtained
in~\cite{Vernikov-codistr}. The following two examples show that, in contrast
with the overcommutative case, the lattice \textbf{SEM} contains distributive
but not codistributive elements and codistributive but not distributive ones.
In particular, there are [co]distributive but not neutral elements of \textbf
{SEM}.

\begin{example}
\label{distr not codistr}
The variety $\mathcal N=\var\{x^2y=xyx=yx^2=0\}$ is a distributive element of
the lattice \textbf{SEM} by~\cite[Theorem~1.1]{Vernikov-Shaprynskii-distr}.
But this variety is not a codistributive (and moreover not a neutral) element
of \textbf{SEM} by~\cite[Theorem~1.1]{Vernikov-codistr}.
\end{example}

\begin{example}
\label{codistr not distr}
The varieties $\mathcal A_p=\var\{x^py=y,xy=yx\}$ with any prime $p$,
$\mathcal{LZ}=\var\{xy=x\}$, and $\mathcal{RZ}=\var\{xy=y\}$ are
codistributive elements of the lattice \textbf{SEM}. This follows from the
well known facts that these varieties are atoms of \textbf{SEM} and \textbf
{SEM} satisfies the condition
$$\forall x,y,z\colon\quad x\wedge z=y\wedge z=0\longrightarrow(x\vee y)
\wedge z=0$$
(see~\cite[Section~1]{Shevrin-Vernikov-Volkov-09}, for instance). But
$\mathcal A_p$, $\mathcal{LZ}$, and $\mathcal{RZ}$ are not distributive (and
moreover not neutral) elements of \textbf{SEM} by~\cite[Theorem~1.1]
{Vernikov-Shaprynskii-distr}.
\end{example}

\small

\end{document}